\newtheorem{theorem}{Theorem}
\newtheorem{lemma}{Lemma}
\newtheorem{mynote}{Note}[section]
\theoremstyle{definition}
\newtheorem{definition}{Definition}
\newtheorem{remark}{Remark}
\newcommand{\go}{\stackrel{\circ }{\mathfrak{g}}}
\newcommand{\ao}{\stackrel{\circ }{\mathfrak{a}}}
\newcommand{\co}[1]{\stackrel{\circ }{#1}}
\newcommand{\gf}{\mathfrak{g}}
\newcommand{\af}{\mathfrak{a}}
\newcommand{\afb}{\mathfrak{a}_{\bot}}
\newcommand{\hf}{\mathfrak{h}}
\newcommand{\hfb}{\mathfrak{h}_{\bot}}
\begin{document}

\title{Recursive algorithm and branching for nonmaximal embeddings}
\author{V D Lyakhovsky$^1$ and A A Nazarov$^2$}
\address{$^{1,2}$ Theoretical Department, SPb State University,
198904, Sankt-Petersburg, Russia }
\eads{$^1$ \mailto{lyakh1507@nm.ru}, $^2$ \mailto{antonnaz@gmail.com}}

\begin{abstract}
  Recurrent relations for branching coefficients in affine Lie algebras
  integrable highest weight modules are studied. The decomposition algorithm
  based on the injection fan technique is developed for the case of an arbitrary
  reductive subalgebra. In particular we consider the situation where
  the Weyl denominator becomes singular with respect to the subalgebra.
  We demonstrate
  that for any reductive subalgebra it is possible to define the
  injection fan and the analogue of the Weyl numerator -- the tools that describe
  explicitly the recurrent properties of branching coefficients.
  Possible applications of the fan technique in CFT models are considered.
\end{abstract}
\ams{17B67, 17B10}
\submitto{\JPA}

\section{Introduction}
\label{sec:introduction}

The branching problem for affine Lie algebras emerges in conformal field theory, for example,
in the construction of modular-invariant partition functions \cite{difrancesco1997cft}.
Recently the problem of conformal embeddings was considered in the paper \cite{coquereaux2008conformal}.

There are different approaches to deal with the branching coefficients. Some of them use the BGG
resolution \cite{bernstein1975differential} (for Kac-Moody algebras the algorithm is described in
\cite{kac1990idl},\cite{wakimoto2001idl}), the Schur function series \cite{fauser2006new}, the BRST
cohomology \cite{Hwang:1994yr}, Kac-Peterson formulas \cite{kac1990idl,quella2002branching} or the
combinatorial methods applied in \cite{feigin707principal}.

In this paper we prove that
for an arbitrary reductive subalgebra the branching coefficients are subject to
the recurrent properties that can be explicitly formulated and that there exists
an effective and simple algorithm to solve these recurrent relations step by step.
The basic idea is similar to the one used in \cite{ilyin812pbc} for maximal embeddings.
In our case the algorithm is essentially different, new properties of singular weights
are determined to deal with an arbitrary reductive injection $\frak{a} \rightarrow \frak{g}$.

The principal point is to consider the subalgebra $\af$ together with its
counterpart $\afb$ orthogonal to $\af$.
For any reductive algebra $\af$ the subalgebra $\afb \subset \frak{g} $ is regular and reductive.
For a highest weight module $L^{\left( \mu \right)}$ and orthogonal pair of subalgebras
$\left(  \af, \afb \right)$ we consider
the so called singular element $\Psi^{\left( \mu \right)}$ (the numerator
in the Weyl character formula
$ch\left( L^{\mu }\right) =\frac{\Psi ^{\left( \mu \right) }}{\Psi ^{\left( 0\right) }}$,
see for example \cite{humphreys1997introduction})
the Weyl denominator $\Psi ^{\left( 0\right) }_{\afb}$ and the projection
$\Psi ^{\left( \mu \right) }_{\left(  \af, \afb \right)}
=\pi_{\af}\frac{\Psi ^{\left( \mu \right) }_{\frak{g}}}{\Psi ^{\left( 0\right) }_{\afb}}$.
We prove that for any highest weight $\hf$-diagonalizable module $L^{\left( \mu \right)}$ and orthogonal pair
$\left(  \af, \afb \right)$ the element
$\Psi ^{\left( \mu \right) }_{\left(  \af, \afb \right)}$ has a decomposition with respect to
the set of Weyl numerators $\Psi ^{\left( \mu \right) }_{ \afb }$ of $\afb$.
This decomposition provides the possibility to construct the recurrent property for branching coefficients corresponding
to the injection $\frak{a} \rightarrow \frak{g} $.
The property is formulated in
terms of a specific element $\Gamma_{\af \rightarrow \gf}$ of the group algebra
$\mathcal{E}\left( \frak{g} \right)$ called "the injection fan".
Using this tool we formulate a simple and
explicit algorithm for branching coefficients computations applicable for an arbitrary (maximal or nonmaximal)
subalgebras of finite-dimensional or affine Lie algebras.
In the case of maximal embedding the corresponding fan is unsubtracted, the singular element
becomes trivial
$\Psi ^{\left( \mu \right) }_{\left(  \af, \afb \right)}=\Psi ^{\left( \mu \right) }_{\left(  \gf\right)}$
and the relations described earlier in \cite{ilyin812pbc} are reobtained.

We demonstrate that our algorithm is effective and can be used in studies
of conformal embeddings and coset constructions in rational conformal field theory.

The paper is organized as follows. In the subsection \ref{sec:notation}  we fix the general notations.
In the Section \ref{sec:recurr-form-branch} we derive the decomposition formula based on
recurrent properties of anomalous branching coefficients and describe the decomposition algorithm
for integrable highest weight modules
$L_{\mathfrak{g}}$ with respect to a reductive subalgebra $\mathfrak{a}\subset \mathfrak{g}$
(subsection \ref{sec:algorithm}). In the Section \ref{sec:finite-dimens-lie} we present several
simple examples for finite-dimensional Lie algebras. The affine Lie algebras and their applications in
CFT models are considered in Section \ref{sec:phys-appl}.
The general properties of the proposed algorithm and
possible further developments are discussed (Section \ref{sec:conclusion}).

\subsection{Notation}
\label{sec:notation}

Consider affine Lie algebras $\frak{g}$ and $\af$ with the
underlying finite-dimensional subalgebras $\go$ and $%
\ao$ and an injection $\af\longrightarrow \frak{g%
}$ such that $\af$ is a reductive subalgebra $\frak{a\subset g}$ with
correlated root spaces: $\frak{h}_{\af}^{\ast }\subset \frak{h}_{\frak{g%
}}^{\ast }$ and $\frak{h}_{\ao}^{\ast }\subset \frak{h%
}_{\go}^{\ast }$\
.
We use the following notations:

$L^{\mu }$\ $\left( L_{\af}^{\nu }\right) $\ --- the integrable module
of $\frak{g}$ with the highest weight $\mu $\ ; (resp. integrable $\af$
-module with the highest weight $\nu $ );

$r$ , $\left( r_{\af}\right) $ --- the rank of the algebra $\frak{g}$ $%
\left( \mbox{resp. }\af\right) $ ;

$\Delta $ $\left( \Delta _{\af}\right) $--- the root system; $\Delta
^{+} $ $\left( \mbox{resp. }\Delta _{\af}^{+}\right) $--- the positive
root system (of $\frak{g}$ and $\af$ respectively);

$\mathrm{mult}\left( \alpha \right) $ $\left( \mathrm{mult}_{\af}\left(
\alpha \right) \right) $ --- the multiplicity of the root $\alpha$ in $\Delta
$ (resp. in $\left( \Delta _{\af}\right) $);

$\co{\Delta}$ , $\left( \co{\Delta _{\af}}%
\right)$ --- the finite root system of the subalgebra $\co{%
\frak{g}}$ (resp. $\co{\af}$);

$\mathcal{N}^{\mu }$ , $\left( \mathcal{N}_{\af}^{\nu }\right) $ --- the
weight diagram of $L^{\mu }$ $\left( \mbox{resp. }L_{\af}^{\nu }\right)
$ ;

$W$ , $\left( W_{\af}\right) $--- the corresponding Weyl group;

$C$ , $\left( C_{\af}\right) $--- the fundamental Weyl chamber;

$\bar{C}, \left(\bar{C_{\mathfrak{a}}}\right)$ --- the closure of the fundamental Weyl chamber;

$\rho $\ , $\left( \rho _{\af}\right) $\ --- the Weyl vector;

$\epsilon \left( w\right) :=\det \left( w\right) $ ;

$\alpha _{i}$ , $\left( \beta _{j}\right) $ --- the $i
$-th (resp. $j$-th) basic root for $\frak{g}$ $\left( \mbox{resp. }\af%
\right) $; $i=0,\ldots ,r$,\ \ $\left( j=0,\ldots ,r_{\af}\right) $;

$\delta $ --- the imaginary root of $\frak{g}$ (and of $\af$ if any);

$\alpha _{i}^{\vee }$ , $\left( \alpha _{\left( \af\right) j}^{\vee
}\right) $--- the basic coroot for $\frak{g}$ $\left( \mbox{resp. }\af%
\right) $ , $i=0,\ldots ,r$ ;\ \ $\left( j=0,\ldots ,r_{\af}\right) $;

$\co{\xi }$ , $\co{\xi _{\left( \af\right) }}$
--- the finite (classical) part of the weight $\xi \in P$ , $\left( \mbox{%
resp. }\xi _{\left( \af\right) }\in P_{\af}\right) $;

$\lambda =\left( \co{\lambda };k;n\right) $ --- the
decomposition of an affine weight indicating the finite part $\co{\lambda }$, level $k$ and grade $n$;

$P$ $\left( \mbox{resp. } P_{\af}\right) $ \ --- the weight lattice;

$m_{\xi }^{\left( \mu \right) }$ , $\left( m_{\xi }^{\left( \nu \right)
}\right) $ --- the multiplicity of the weight $\xi \in P$ \ $\left( \mbox{%
resp. }\in P_{\af}\right) $ in the module $L^{\mu }$ , (resp. $\xi \in
L_{\af}^{\nu } $);

$ch\left( L^{\mu }\right) $ $\left( \mbox{resp. }ch\left( L_{\af}^{\nu
}\right) \right) $--- the formal character of $L^{\mu }$ $\left( \mbox{resp. }%
L_{\af}^{\nu }\right) $;

$ch\left( L^{\mu }\right) =\frac{\sum_{w\in W}\epsilon (w)e^{w\circ (\mu
+\rho )-\rho }}{\prod_{\alpha \in \Delta ^{+}}\left( 1-e^{-\alpha }\right) ^{%
\mathrm{{mult}\left( \alpha \right) }}}$ --- the Weyl-Kac formula;

$R:=\prod_{\alpha \in \Delta ^{+}}\left( 1-e^{-\alpha }\right) ^{\mathrm{{%
mult}\left( \alpha \right) }}\quad $
$\left( \mbox{resp. }R_{\af}:=\prod_{\alpha \in \Delta _{%
\af}^{+}}\left( 1-e^{-\alpha }\right) ^{\mathrm{mult}_{\af}\mathrm{%
\left( \alpha \right) }}\right) $--- the Weyl denominator.

\section{Recurrent relations for branching coefficients.}
\label{sec:recurr-form-branch}

Consider the integrable module $L^{\mu }$
of $\frak{g}$ with the highest weight $\mu $ and
let $\af\subset \frak{g}$ be a reductive subalgebra of $\frak{g}$.
With respect to $\af$ the module $L^{\mu }$ is completely reducible,
\begin{equation*}
 L_{\frak{g}\downarrow \af}^{\mu }=\bigoplus
\limits_{\nu \in P_{\af}^{+}}b_{\nu }^{\left( \mu \right) }L_{\af}^{\nu }.
\end{equation*}
Using the projection operator $\pi_{\af}$ (to the weight space $\frak{h_a}^*$)
one can  rewrite this decomposition in terms of formal characters:
\begin{equation}
\label{branching1}
 \pi _{\af}\circ ch\left( L^{\mu }\right)
 =\sum_{\nu \in P_{\af}^{+}}b_{\nu }^{(\mu)}ch\left( L_{\af}^{\nu }\right) .
\end{equation}
We are interested in branching coefficients $b^{(\mu)}_{\nu}$.

\subsection{Orthogonal subalgebra and injection fan.}
\label{subsec:branching-orthog-pair}

In this subsection we shall introduce some simple constructions that will be used
in our studies of branching and in particular the "orthogonal partner" $\afb$ for a
reductive subalgebra $\af$  in  $\gf$.

In the Weyl-Kac formula both numerator and denominator  can be considered
as formal elements containing the singular weights of the Verma modules $V^{\xi}$
with the highest weights $\xi=\mu$ and $\xi=0$ \cite{humphreys1997introduction}.
We attribute singular elements to the corresponding integrable modules $L^{\mu }$
and $L_{\af}^{\nu }$:
\begin{equation*}
\Psi ^{\left( \mu \right) }:=\sum\limits_{w\in W}\epsilon (w)e^{w\circ (\mu +\rho )-\rho },
\end{equation*}
\begin{equation*}
\Psi _{ \af}^{\left( \nu \right) }:=
\sum\limits_{w\in W_{\af}}\epsilon (w)e^{w\circ (\nu +\rho
_{_{\af}})-\rho _{_{\af}}}.
\end{equation*}
and use the Weyl-Kac formula in the form
\begin{equation}
\label{Weyl-Kac2}
ch\left( L^{\mu }\right) =\frac{\Psi ^{\left( \mu \right) }}
{\Psi ^{\left( 0 \right) }}=\frac{\Psi ^{\left( \mu \right) }}{R}.
\end{equation}

Applying formula (\ref{Weyl-Kac2}) to the branching rule (\ref{branching1})
we get the relation connecting the
singular elements $\Psi ^{\left( \mu \right) }$ and $\Psi _{ \af}^{\left( \nu \right) }$ :
\begin{eqnarray}
\nonumber
\pi _{\af}\left( \frac{\sum_{w \in W}\epsilon (w )e^{w
(\mu +\rho )-\rho }}{\prod_{\alpha \in \Delta ^{+}}(1-e^{-\alpha })^{\mathrm{%
mult}(\alpha )}}\right) &=&\sum_{\nu \in P_{\af}^{+}}b_{\nu }^{(\mu )}%
\frac{\sum_{w \in W_{\af}}\epsilon (w )e^{w (\nu +\rho _{%
\af})-\rho _{\af}}}{\prod_{\beta \in \Delta _{\af%
}^{+}}(1-e^{-\beta })^{\mathrm{mult}_{\af}(\beta )}},  \label{eq:4} \\
\pi _{\af}\left( \frac{\Psi ^{\left( \mu \right) }}{R}\right)
&=&\sum_{\nu \in P_{\af}^{+}}b_{\nu }^{(\mu )}\frac{\Psi _{ \frak{%
a}}^{\left( \nu \right) }}{R_{\af}}.
\end{eqnarray}
Here $\Delta _{\af}^{+}$ is the set of
positive roots of the subalgebra $\af$ (without loss of generality we consider
them as vectors from the positive root space $\frak{h}^{\ast  +}$ of $\frak{g}$).

Consider the root subspace
$\frak{h}_{\perp \af}^{\ast }$ orthogonal to  $\af$,
\begin{equation*}
\frak{h}_{\perp \af}^{\ast }:=\left\{ \eta \in \frak{h}^{\ast }
|\forall h \in \hf_{\af};  \eta\left(h \right)=0 \right\} ,
\end{equation*}
and the roots (correspondingly -- positive roots) of $\frak{g}$ orthogonal
to $\af$,
\begin{eqnarray*}
\Delta _{\af_{\perp }} &:&=\left\{ \beta \in \Delta _{\frak{g}}|
\forall h \in \hf_{\af};  \beta\left(h \right)=0  \right\} , \\
\Delta _{\af_{\perp }}^{+} &:&=\left\{ \beta ^{+}\in \Delta _{\frak{g}%
}^{+}|\forall h \in \hf_{\af};  \beta^{+}\left(h \right)=0  \right\} .
\end{eqnarray*}
Let $W_{\af_{\perp }}$ be the subgroup of $W$ generated by the
reflections $w _{\beta }$ for the roots $\beta \in \Delta _{\af%
_{\perp }}^{+}$ . The subsystem $\Delta _{\af_{\perp }}$ determines the
subalgebra $\af_{\perp }$ with the Cartan subalgebra $\frak{h}_{\af%
_{\perp }}$. Let
\begin{equation*}
\frak{h}_{\perp }^{\ast }:=\left\{ \eta \in \frak{h}_{\perp \af}^{\ast
}|\forall h \in \hf_{\af\oplus \af_{\perp}}; \eta \left( h \right)=0 \right\}
\end{equation*}
and consider the subalgebras
\begin{eqnarray*}
\widetilde{\af_{\perp }} &:&=\af_{\perp }\oplus \frak{h}_{\perp }
\\
\widetilde{\af} &:&=\af\oplus \frak{h}_{\perp }.
\end{eqnarray*}
Algebras $\af$ and $\af_{\perp }$ form the ''orthogonal pair''
$\left( \af,\af_{\perp}\right) $
of subalgebras in $\frak{g}$.

For the Cartan subalgebras we have the decomposition
\begin{equation}
\frak{h}=\frak{\frak{h}_{\af}}\oplus \frak{h}_{\af_{\perp }}\oplus
\frak{h}_{\perp }=\frak{\frak{h}_{\widetilde{\af}}}\oplus \frak{h}_{%
\af_{\perp }}=\frak{\frak{h}_{\widetilde{\af_{\perp }}}}\oplus
\frak{h}_{\af}.
\end{equation}
For the subalgebras of an orthogonal pair $\left( \af,\af_{\perp
}\right) $ we consider the corresponding Weyl vectors, $\rho _{\af}$
and $\rho _{\af_{\perp }}$ , and\ form the so called ''defects'' $%
\mathcal{D}_{\af}$ and $\mathcal{D}_{\af_{\perp }}$ of the
injection:
\begin{equation}
\mathcal{D}_{\af}:=\rho _{\af}-\pi _{\af}\rho ,
\end{equation}
\begin{equation}
\label{defect-perp}
\mathcal{D}_{\af_{\perp }}:=\rho _{\af_{\perp }}-\pi _{\af%
_{\perp }}\circ\rho .
\end{equation}
For the highest weight module $L_{\frak{g}}^{\mu }$ consider the singular
weights $\left\{\left( w(\mu +\rho )-\rho \right)|w  \in W \right\}$ and
their projections to $h_{\widetilde{\af_{\perp }}}^{\ast }$ (additionally
shifted by the defect $-\mathcal{D}_{\af_{\perp }}$):
\begin{equation*}
\mu _{\widetilde{\af_{\perp }}}\left( w\right) :=\pi _{\widetilde{\frak{%
a}_{\perp }}}\circ\left[ w(\mu +\rho )-\rho \right] -\mathcal{D}_{\af_{\perp
}},\quad w\in W.
\end{equation*}
Among the weights $\left\{\mu _{\widetilde{\af_{\perp }}}\left( w\right)
|w\in W\right\}$ choose those located in the fundamental chamber $\overline{C_{%
\widetilde{\af_{\perp }}}}$ and let $U$ be the set of representatives $%
u $ for the classes $W/W_{\af_{\perp }}$ such that

\begin{equation}
U:=\left\{ u\in W|\quad \mu _{\widetilde{\af_{\perp }}}\left( u\right)
\in \overline{C_{\widetilde{\af_{\perp }}}}\right\} \quad .
\label{U-def}
\end{equation}
For the same set $U$ introduce the weights
\begin{equation*}
\mu _{\af}\left( u\right) :=\pi _{\af}\circ\left[ u(\mu +\rho )-\rho %
\right] +\mathcal{D}_{\af_{\perp }}.
\end{equation*}
To simplify the form of relations we shall now on omit the sign "$\circ$" in projected
weights.

To describe the recurrent properties for branching coefficients $b_{\nu
}^{(\mu )}$ we shall use the technique elaborated in \cite{ilyin812pbc}. One of the
main tools is the set of weights $\Gamma _{\af\rightarrow \frak{g}%
} $ called the injection fan. As far as we consider the general situation
(where the injection is not necessarily maximal) the notion of the injection fan is
modified:

\begin{definition}
\label{fan-definition} For the product
\begin{equation}
\prod_{\alpha \in \Delta ^{+}\setminus \Delta _{\afb }^{+}}\left( 1-e^{-\pi
_{\af}\alpha }\right) ^{\mathrm{mult}(\alpha )-\mathrm{mult}_{\af%
}(\pi _{\af}\alpha )}=-\sum_{\gamma \in P_{\af}}s(\gamma
)e^{-\gamma }  \label{eq:6}
\end{equation}
consider the carrier $\Phi _{\af\subset \frak{g}}\subset P_{\af}$
of the function $s(\gamma )=\det \left( \gamma \right) $ :
\begin{equation}
\Phi _{\af\subset \frak{g}}=\left\{ \gamma \in P_{\af}|s(\gamma
)\neq 0\right\}   \label{eq:37}
\end{equation}
The ordering of roots in $\co{\Delta _{\af}}$ induce the
natural ordering of the weights in $P_{\af}$. Denote by $\gamma _{0}$
the lowest vector of $\Phi _{\af\subset \frak{g}}$ . The set
\begin{equation}
\Gamma _{\af\rightarrow \frak{g}}=\left\{ \xi -\gamma _{0}|\xi \in \Phi _{%
\af\subset \frak{g}}\right\} \setminus \left\{ 0\right\}
\label{fan-defined}
\end{equation}
is called the \textit{injection fan}.
\end{definition}
In the next subsection we shall see how the injection fan defines the recurrent
properties of branching coefficients. It must be noticed that the injection fan is
the universal instrument that depends only on the injection and doesn't depend on 
the properties of a module.

\subsection{Decomposing the singular element.}
\label{subsec:decomp-sing-element}

Now we shall prove that the Weyl-Kac character formula (in terms of singular
elements) describes the particular case of a more general relation:

\begin{lemma}
\label{lemma}
Let $\left( \af,\afb \right)$ be the orthogonal pair of reductive
subalgebras in $\frak{g}$, with $\widetilde{\af_{\perp }}=\af%
_{\perp }\oplus \frak{h}_{\perp }$ and $\widetilde{\af}=\af\oplus
\frak{h}_{\perp }$ ,

$L^{\mu }$ be the highest weight module with the singular element
$\Psi ^{\left(\mu \right)}$ ,

$R_{\af_{\perp }}$ be the Weyl denominator for $\af_{\perp }$.

Then the element $\Psi ^{\left( \mu \right) }_{\left(  \af, \afb \right)}
=\pi _{\af}\left( \frac{\Psi _{\frak{g}}^{\mu }}{R_{\af_{\perp }}}\right) $
can be decomposed into the sum over $u\in U$ (see (\ref{U-def})) of
the singular weights $e^{\mu _{\af}\left( u\right) }$ with the
coefficients $\epsilon (u)\mathrm{\dim }\left( L_{\widetilde{\af_{\perp
}}}^{\mu _{\widetilde{\af_{\perp }}}\left( u\right) }\right) $:
\begin{equation}
\Psi ^{\left( \mu \right) }_{\left(  \af, \afb \right)}=\quad \pi _{\af}\left( \frac{\Psi^{\mu }}{R_{\af%
_{\perp }}}\right) =\sum_{u\in U}\;\epsilon (u)\mathrm{\dim }
\left( L_{\widetilde{\af_{\perp }}}^{\mu _{%
\widetilde{\af_{\perp }}}\left( u\right) }\right) e^{\mu _{\af}\left( u \right) }.
\end{equation}
\end{lemma}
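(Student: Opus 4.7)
The plan is to begin with $\Psi^{(\mu)}=\sum_{w\in W}\epsilon(w)\,e^{w(\mu+\rho)-\rho}$ and to split the Weyl group sum by cosets of $W_{\afb}\subset W$. Writing each $w$ uniquely as $vu$ with $v\in W_{\afb}$ and $u$ in a chosen transversal, the signs multiply, and the inner $v$-sum becomes tractable by exploiting the geometry of $W_{\afb}$: reflections in $\Delta_{\afb}^{+}$ fix the $\hf_{\af}^{*}$- and $\hf_{\perp}^{*}$-components of any weight pointwise and only rotate the $\hf_{\afb}^{*}$-component. After decomposing $u(\mu+\rho)$ along $\hf^{*}=\hf_{\af}^{*}\oplus\hf_{\afb}^{*}\oplus\hf_{\perp}^{*}$, the $v$-invariant factors come out of the inner sum and what remains is a Weyl-group average of the form $\sum_{v\in W_{\afb}}\epsilon(v)\,e^{v\,\pi_{\afb}u(\mu+\rho)}$.

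The next step is to recognise this inner sum through the classical Weyl character formula for $\afb$. The set $U$ of (\ref{U-def}) is tailor-made for this: its defining condition $\mu_{\widetilde{\afb}}(u)\in\overline{C_{\widetilde{\afb}}}$ unwinds to $\pi_{\afb}u(\mu+\rho)-\rho_{\afb}$ being dominant integral for $\afb$, so the inner sum evaluates to $e^{\rho_{\afb}}R_{\afb}\,\mathrm{ch}\bigl(L_{\afb}^{\pi_{\afb}u(\mu+\rho)-\rho_{\afb}}\bigr)$, while cosets whose representatives lie on a wall of $W_{\afb}$ contribute zero and can be dropped. Dividing by $R_{\afb}$ cancels that denominator. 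A short bookkeeping then shows that the surplus Weyl-vector shift $-\pi_{\afb}\rho+\rho_{\afb}$ is precisely $\mathcal{D}_{\afb}$, and that the leftover $\hf_{\perp}^{*}$-component $\pi_{\perp}(u(\mu+\rho)-\rho)$ is the $\hf_{\perp}$-charge which promotes the simple $\afb$-module to the simple $\widetilde{\afb}$-module $L_{\widetilde{\afb}}^{\mu_{\widetilde{\afb}}(u)}$ appearing in the statement; equivalently $e^{\rho_{\afb}-\pi_{\afb}\rho}\cdot e^{\pi_{\perp}(u(\mu+\rho)-\rho)}\cdot\mathrm{ch}(L_{\afb}^{\cdot})=e^{\mathcal{D}_{\afb}}\,\mathrm{ch}(L_{\widetilde{\afb}}^{\mu_{\widetilde{\afb}}(u)})$.

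Finally I apply $\pi_{\af}$. Every weight in $\mathrm{ch}(L_{\widetilde{\afb}}^{\mu_{\widetilde{\afb}}(u)})$ lies in $\hf_{\widetilde{\afb}}^{*}$, which is orthogonal to $\hf_{\af}^{*}$, so the projection turns every such exponential into $1$ and the whole character collapses to the scalar $\dim L_{\widetilde{\afb}}^{\mu_{\widetilde{\afb}}(u)}$; the remaining prefactor becomes exactly $e^{\mu_{\af}(u)}$ in the sense defined before the lemma. I expect the middle step to be the main obstacle: one must carry out the identification of the three $\rho$-shifts (from $\gf$, $\afb$ and $\widetilde{\afb}$) with the defect $\mathcal{D}_{\afb}$ and with the prescribed highest weights $\mu_{\widetilde{\afb}}(u)$ and $\mu_{\af}(u)$ without slip, and to check that $U$ is a genuine transversal for precisely those cosets whose inner Weyl sum is non-vanishing. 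Boundary cases in which $\pi_{\afb}u(\mu+\rho)$ is fixed by some reflection in $W_{\afb}$ kill both sides of the claimed equality simultaneously and so cause no trouble.
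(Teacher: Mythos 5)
Your proposal follows essentially the same route as the paper's own proof: decompose the Weyl sum over cosets of $W_{\afb}$, pull the $W_{\afb}$-invariant ($\af$- and $\hf_{\perp}$-) components out of the inner alternating sum, recognise that sum as the Weyl numerator of the $\widetilde{\afb}$-module $L_{\widetilde{\afb}}^{\mu_{\widetilde{\afb}}(u)}$ after the $\mathcal{D}_{\afb}$ bookkeeping, and then divide by $R_{\afb}$ and apply $\pi_{\af}$ so the character collapses to its dimension. Your explicit remark that cosets whose $\afb$-projection is singular contribute zero (and so are harmlessly absent from $U$) is a small point the paper leaves implicit, but it does not change the argument.
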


\begin{proof}
With $u\in U $   and $v\in W_{\afb}$ perform the decomposition
\begin{equation*}
u(\mu +\rho )=\pi _{\left( \af\right) } u(\mu +\rho )+\pi _{\left(
\widetilde{\af_{\perp }}\right) } u(\mu +\rho )
\end{equation*}
for the singular weight $vu(\mu +\rho )-\rho$:
\begin{equation}
\label{sing-decomp-1}
\begin{array}{lcl}
vu(\mu +\rho )-\rho &=&\pi _{\left( \af\right) }\left( u(\mu +\rho
)\right) -\rho +\rho _{\af_{\perp }}+\pi _{\left( \hfb\right) }\rho \\
&& + \ v\left( \pi _{\left( \widetilde{%
\af_{\perp }}\right) }u(\mu +\rho )-\rho _{\af_{\perp }}+\rho _{%
\af_{\perp }}\right) -\rho _{\af_{\perp }} -\pi _{\left( \hfb\right) }\rho.
\end{array}
\end{equation}
Use the defect $\mathcal{D}_{\afb}$ (\ref{defect-perp}) to simplify
the first summand in (\ref{sing-decomp-1}):
\begin{equation*}
\begin{array}{r}
\pi _{\left( \af\right) }\left( u(\mu +\rho )\right) -\rho +\rho _{%
\mathfrak{a}_{\perp }}+\pi _{\left( \hfb\right) }\rho = \\
\pi _{\left( \af\right) }\left( u(\mu +\rho )\right) -\pi _{\af}\rho
-\pi _{\afb}\rho +\rho _{\afb}= \\
=\pi _{\left( \af\right) }\left( u(\mu +\rho )-\rho \right) +%
\mathcal{D}_{\afb},
\end{array}
\end{equation*}
and the second one:
\begin{equation*}
\begin{array}{c}
v\left( \pi _{\left( \widetilde{%
\af_{\perp }}\right) }u(\mu +\rho )-\rho _{\af_{\perp }}+\rho _{%
\af_{\perp }}\right) -\rho _{\af_{\perp }}-\pi _{\left( \hfb\right) }\rho=\\
v\left( \pi _{\left( \widetilde{%
\afb}\right) }u(\mu +\rho )
- \mathcal{D}_{\afb} - \pi _{\left( \afb\right) }\rho-\pi _{\left( \hfb\right) }\rho
+\rho _{\afb}\right) -\rho _{\afb}=\\
=v\left( \pi _{\left( \widetilde{%
\afb}\right) }\left[ u(\mu +\rho )-\rho\right]
- \mathcal{D}_{\afb}
+\rho _{\afb}\right) -\rho _{\afb}.
\end{array}
\end{equation*}
These expressions provide a kind of a factorization in the anomalous element $\Psi^{\mu
}$ and we find in it the combination of
anomalous elements $\Psi _{\widetilde{\af_{\perp }}}^{\eta }$ of the
subalgebra $\widetilde{\af_{\perp }}$-modules $L_{\widetilde{\af%
_{\perp }}}^{\eta }$:
\begin{equation*}
\begin{array}{l}
\Psi^{\mu }=\sum_{u\in U}\sum_{v\in W_{\af_{\perp }}}
\epsilon (v)\epsilon (u)e^{vu(\mu +\rho )-\rho }= \\
=\sum_{u\in U}\epsilon (u)e^{\pi _{\af}\left[ u(\mu +\rho )-\rho \right]
+\mathcal{D}_{\af_{\perp }}}\sum_{v\in W_{\af_{\perp }}}\epsilon
(v)e^{v\left( \pi _{\left( \widetilde{\af_{\perp }}\right) }\left[
u(\mu +\rho )-\rho \right] -\mathcal{D}_{\af_{\perp }}+\rho _{\af%
_{\perp }}\right) -\rho _{\af_{\perp }}}= \\
=\sum_{u\in U}\;\epsilon (u)e^{\pi _{\left( \af\right) }\left[ u(\mu
+\rho )-\rho \right] +\mathcal{D}_{\af_{\perp }}}\Psi _{\widetilde{%
\af_{\perp }}}^{\pi _{\left( \widetilde{\af_{\perp }}\right) }%
\left[ u(\mu +\rho )-\rho \right] -\mathcal{D}_{\af_{\perp }}}
\end{array}
\end{equation*}

Dividing both sides by the Weyl element $R_{\af_{\perp }}=\prod_{\beta
\in \Delta _{\af_{\perp }}}(1-e^{-\beta })^{\mathrm{mult}(\beta )}$ and
projecting them to the weight space $h_{\af}^{\ast }$\ we obtain the
desired relation:
\begin{eqnarray*}
\Psi ^{\left( \mu \right) }_{\left(  \af, \afb \right)}
&=&\sum_{u\in W/W_{\af_{\perp }}}\;\epsilon (u)e^{\pi _{\frak{a%
}}\left[ u(\mu +\rho )-\rho \right] }\pi _{%
\af}\left( \frac{\Psi _{\widetilde{\af_{\perp }}}^{\pi _{\left(
\widetilde{\af_{\perp }}\right) }\left[ u(\mu +\rho )-\rho \right] -%
\mathcal{D}_{\af_{\perp }}}}{\prod_{\beta \in \Delta _{\af_{\perp
}}}(1-e^{-\beta })^{\mathrm{mult}(\beta )}}\right)  \\
&=&\sum_{u\in U}\;\epsilon (u)\mathrm{\dim }\left( L_{\widetilde{\af%
_{\perp }}}^{\mu _{\widetilde{\af_{\perp }}}\left( u\right) }\right)
e^{\pi _{\af}\left[ u(\mu +\rho )-\rho \right] }.
\end{eqnarray*}
\end{proof}

\begin{remark}
This relation can be considered a generalized form of the Weyl formula for singular
element $\Psi _{\frak{g}}^{\mu }$ : the vectors $\mu _{\af}\left(
u\right) $ play the role of singular weights while instead of the determinants $%
\epsilon (u)$ we have the products $\epsilon (u)\mathrm{\dim }\left( L_{%
\widetilde{\af_{\perp }}}^{\mu _{\widetilde{\af_{\perp }}}\left(
u\right) }\right) .$ In fact when $\frak{a=g}$ both $\af_{\perp }$ and $%
\frak{h}_{\perp }$ are trivial, $U=W$ , and\ the original Weyl formula is
easily reobtained.
\end{remark}

\subsection{Constructing recurrent relations.}
\label{subsec:Construct-recurrent-rel}

Consider the right-hand side of relation (\ref{eq:4}).
The numerator there describes the branching in terms of singular elements and
it is reasonable to expand it as an element of $\mathcal{E}\left( \frak{g} \right)$:
\begin{equation}
  \label{eq:21}
  \sum_{\nu \in \bar{C_{\mathfrak{a}}}}b_{\nu }^{\left( \mu \right) }\Psi _{\left( \frak{%
        a}\right) }^{\left( \nu \right) }=\sum_{\lambda \in P_{\af}}k_{\lambda
  }^{\left( \mu \right) }e^{\lambda }.
\end{equation}
Here the coefficients $k_{\lambda}^{\left( \mu \right) }$ are integer and their signs
depend on the length (see \cite{humphreys1997introduction})  of the Weyl group elements in
$\Psi _{\left( \frak{a}\right) }^{\left( \nu \right) }$. The important property of
$k_{\lambda}^{\left( \mu \right) }$'s is that they coincide with the branching coefficients
for all weights $\nu$ inside the main Weil chamber:
\begin{equation}
  b^{(\mu)}_{\nu}=k^{(\mu)}_{\nu} \; \mbox{for} \; \nu\in \bar{C}_{\mathfrak{a}}.
\label{eq:21-1}
\end{equation}
We call the coefficients $k_{\lambda}$ --- the anomalous branching coefficients
(see also \cite{ilyin812pbc}).

Now we can state the main theorem which gives us an instrument for the
recurrent computation of branching coefficients.

\begin{theorem}
  For the anomalous branching coefficients $k^{(\mu)}_{\nu}$ (\ref{eq:21})
  the following relation holds
  \begin{equation}
    \label{recurrent-relation}
    \begin{array}{c}
      k_{\xi }^{\left( \mu \right) }=-\frac{1}{s\left( \gamma _{0}\right) }\left(
        \sum_{u\in U} \epsilon(u)\;
        \dim \left( L_{\widetilde{\af_{\perp }}}^{\mu
        _{\widetilde{\af_{\perp }}}\left( u\right) }\right)
        \delta_{\xi-\gamma_0,\pi_{\af}(u(\mu+\rho)-\rho)}+ \right.\\
      \left.
        +\sum_{\gamma \in
          \Gamma _{\af \rightarrow \gf}}s\left( \gamma +\gamma _{0}\right) k_{\xi
          +\gamma }^{\left( \mu \right) }\right).
    \end{array}
  \end{equation}
\end{theorem}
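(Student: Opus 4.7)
The plan is to combine Lemma~\ref{lemma} with the branching identity (\ref{eq:4}) so as to produce a single convolution equation in $\mathcal{E}(\gf)$ from which the stated recurrence can be read off by matching coefficients. Multiplying (\ref{eq:4}) through by $R_\af$ and invoking (\ref{eq:21}) yields
\[
R_\af\cdot\pia\!\left(\frac{\Psi^{\mu}}{R}\right)=\sum_{\lambda\in P_\af}k_\lambda^{(\mu)}e^\lambda,
\]
while Lemma~\ref{lemma} evaluates the companion projection $\pia(\Psi^\mu/R_\afb)$ explicitly as the finite sum $\sum_{u\in U}\epsilon(u)\dim(L_{\widetilde{\afb}}^{\mu_{\widetilde{\afb}}(u)})\,e^{\pia(u(\mu+\rho)-\rho)}$.

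To bridge the two expressions I would factor $R=R_\afb\cdot(R/R_\afb)$, giving
\[
\pia\!\left(\frac{\Psi^\mu}{R_\afb}\right)=\pia\!\left(\frac{\Psi^\mu}{R}\right)\cdot\pia\!\left(\frac{R}{R_\afb}\right),
\]
and then identify $\pia(R/R_\afb)/R_\af$ with the fan product of Definition~\ref{fan-definition}, so that $\pia(R/R_\afb)=R_\af\cdot\bigl(-\sum_{\gamma\in\Phi_{\af\subset\gf}}s(\gamma)e^{-\gamma}\bigr)$. Substituting both reductions produces the master identity
\[
\sum_{u\in U}\epsilon(u)\dim\!\left(L_{\widetilde{\afb}}^{\mu_{\widetilde{\afb}}(u)}\right)e^{\pia(u(\mu+\rho)-\rho)}=-\left(\sum_{\lambda}k_\lambda^{(\mu)}e^\lambda\right)\left(\sum_{\gamma\in\Phi_{\af\subset\gf}}s(\gamma)e^{-\gamma}\right).
\]

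Equating coefficients of $e^\xi$ on both sides yields the scalar equation $-\sum_\gamma s(\gamma)k_{\xi+\gamma}^{(\mu)}=\sum_{u\in U}\epsilon(u)\dim(\cdots)\,\delta_{\xi,\pia(u(\mu+\rho)-\rho)}$. Isolating the contribution from the lowest weight $\gamma_0$ of $\Phi_{\af\subset\gf}$, shifting $\xi\mapsto\xi-\gamma_0$, and reindexing the residual sum by $\gamma\mapsto\gamma+\gamma_0\in\Gamma_{\af\to\gf}$ (so that $\gamma_0$ itself corresponds to the excluded zero element of the fan), one solves for the isolated coefficient $k_\xi^{(\mu)}$ and recovers exactly (\ref{recurrent-relation}).

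I expect the main technical hurdle to lie in verifying the algebraic identity $\pia(R/R_\afb)/R_\af=-\sum_\gamma s(\gamma)e^{-\gamma}$, since it rests on delicate multiplicity bookkeeping: every root $\alpha\in\Delta^+\setminus\Delta_\afb^+$ of $\gf$ contributes to the projected denominator with its $\gf$-multiplicity, whereas $R_\af$ enters with the corresponding $\af$-multiplicities of $\pia\alpha$, and the difference must combine exactly into the exponents $\mathrm{mult}(\alpha)-\mathrm{mult}_\af(\pia\alpha)$ featured in Definition~\ref{fan-definition}. A related subtlety is to ensure the formal products are supported in a pointed cone, so that $\gamma_0$ is an unambiguous minimal element of $\Phi_{\af\subset\gf}$ and the recurrence terminates, permitting step-by-step evaluation from the highest admissible $\xi$ downward.
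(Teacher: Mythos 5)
Your proposal is correct and follows essentially the same route as the paper: both combine Lemma~\ref{lemma} with (\ref{eq:4}) via the factorization of $R$ through $R_{\afb}$, identify $\pia(R/R_{\afb})/R_{\af}$ with the fan product of Definition~\ref{fan-definition}, equate coefficients to get the linear relations (\ref{eq:17}), and then extract the $\gamma_0$ term and shift $\xi$ to obtain (\ref{recurrent-relation}). The multiplicity bookkeeping you flag as the main hurdle is exactly what the exponent $\mathrm{mult}(\alpha)-\mathrm{mult}_{\af}(\pia\alpha)$ in Definition~\ref{fan-definition} is built to absorb, and the paper uses it in the same way.
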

\begin{proof}
Redress the relation (\ref{eq:4}) for the element $
\frac{\Psi _{\frak{g}}^{\mu }}{R_{\af_{\perp }}}$ using definition (\ref{eq:37})
for the carrier $\Phi _{\af\subset \frak{g}}$ ,
\begin{equation*}
\begin{array}{l}
\Psi ^{\left( \mu \right) }_{\left(  \af, \afb \right)}
=\pi _{\frak{a}}\left( \frac{\Psi _{\frak{g}}^{\mu }}{R_{\af_{\perp }}}%
\right) = \\[2mm]
=\prod\limits_{\alpha \in \Delta ^{+}\setminus \Delta _{\afb }^{+}}\left(
1-e^{-\pi _{\af}\alpha }\right) ^{\mathrm{mult}(\alpha )-\mathrm{mult}_{%
\af}(\pi _{\af}\alpha )}\left( \sum\limits_{\nu \in P_{\af%
}^{+}}b_{\nu }^{(\mu )}\sum\limits_{w\in W_{\af}}\epsilon (w)e^{w(\nu
+\rho _{\af})-\rho _{\af}}\right) = \\[5mm]
=-\sum\limits_{\gamma \in \Phi _{\af\subset \frak{g}}}s(\gamma
)e^{-\gamma }\left( \sum\limits_{\nu \in P_{\af}^{+},w\in W_{\af%
}}\epsilon (w)b_{\nu }^{(\mu )}e^{w(\nu +\rho _{\af})-\rho _{\af%
}}\right)  \\
=-\sum\limits_{\gamma \in \Phi _{\af\subset \frak{g}}}s(\gamma
)e^{-\gamma }\left( \sum\limits_{\nu \in P_{\af}^{+},w\in W_{\af%
}}\epsilon (w)b_{\nu }^{(\mu )}e^{w(\nu +\rho _{\af})-\rho _{\af%
}}\right) .
\end{array}
\end{equation*}

Then expand the sum in brackets (with respect to the formal basis in $\mathcal{E}$%
):
\begin{equation*}
\Psi ^{\left( \mu \right) }_{\left(  \af, \afb \right)}
=-\sum_{\gamma \in \Phi _{\af\subset \frak{g}}}s(\gamma
)e^{-\gamma }\sum_{\lambda \in P_{\af}}k_{\nu }^{(\mu )}e^{\lambda
}=-\sum_{\gamma \in \Phi _{\af\subset \frak{g}}}\sum_{\lambda \in P_{%
\af}}s(\gamma )k_{\nu }^{(\mu )}e^{\lambda -\gamma }.
\end{equation*}
Substitute in the left-hand side the expression obtained in Lemma \ref{lemma},
\begin{eqnarray*}
\Psi ^{\left( \mu \right) }_{\left(  \af, \afb \right)}
&=&\sum_{u\in U}\;\epsilon (u)e^{\pi _{\af}\left( \mu _{\frak{a%
}}\left( u\right) \right) }\dim \left( L_{\widetilde{\af_{\perp }}%
}^{\mu _{\widetilde{\af_{\perp }}}\left( u\right) }\right)
\label{anom modules 2} \\
&=&\sum_{u\in U}\;\epsilon (u)e^{\pi _{\af}\left[ u(\mu +\rho )-\rho %
\right] }\dim \left( L_{\widetilde{\af_{\perp }}}^{\mu _{\widetilde{%
\af_{\perp }}}\left( u\right) }\right)  \\
&=&-\sum_{\gamma \in \Phi _{\af\subset \frak{g}}}\sum_{\lambda \in P_{%
\af}}s(\gamma )k_{\nu }^{(\mu )}e^{\lambda -\gamma }.
\end{eqnarray*}
The immediate consequence of this equality is:
\begin{equation}
\sum_{u\in U}\epsilon (u)\dim \left( L_{\widetilde{\af_{\perp }}}^{\mu
_{\widetilde{\af_{\perp }}}\left( u\right) }\right) \delta _{\xi ,\pi _{%
\af}\left[ u(\mu +\rho )-\rho \right] }+\sum_{\gamma \in \Phi _{\af%
\subset \frak{g}}}s(\gamma )\;k_{\xi +\gamma }^{(\mu )}=0,\quad \xi \in P_{%
\af}.  \label{eq:17}
\end{equation}
The obtained formula means that the coefficients $k_{\xi +\gamma }^{(\mu )}$
for $\gamma \in \Phi _{\af\subset \frak{g}}$ are not independent, they
are subject to the linear relations and the form of these relations changes
when the tested weight $\xi $ coincides with one of \ the ''singular
weights''  $\left\{ \pi _{\af}\left[ u(\mu +\rho )-\rho \right] |u\in
U\right\} $ . To conclude the proof we extract the lowest weight $\gamma _{0}\in \Phi _{\frak{a%
}\subset \frak{g}}$ and pass to the summation over the vectors of the
injection fan $\Gamma _{\af\rightarrow \frak{g}}$ (see the definition \ref
{fan-definition}). Thus we get the desired recurrent relation (\ref{recurrent-relation}).
\end{proof}

\subsection{Embeddings and orthogonal pairs in simple Lie algebras}
\label{sect-embeddings}

In this subsection we discuss some properties of ''orthogonal pairs'' of
subalgebras in simple Lie algebras of classical series.

When both $\frak{g}$ and $\af$ are finite-dimensional all the regular
embeddings can be obtained by a successive elimination of nodes in the
extended Dynkin diagram of $\frak{g}$ (and $\Delta _{\bot }^{+}=\emptyset $
if $\af$ is maximal). For the classical series $A$, $C$ and $D$ when
the regular injection $\af\rightarrow \frak{g}$ is thus fixed, the
Dynkin diagram for $\af_{\bot }$ is obtained from the extended diagram
of $\frak{g}$ by eliminating the subdiagram of $\af$ and the adjacent
nodes:
\begin{table}[tbh]
\label{tab:diagrams} \noindent \centering{\
\begin{tabular}{|l|l|l|}
\hline
$\frak{g}$ & Extended diagram of $\frak{g}$ & Diagrams of the subalgebras $%
\af,\; \afb$   \\ \hline
$A_n$ & \includegraphics{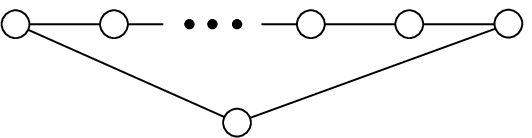} & \includegraphics{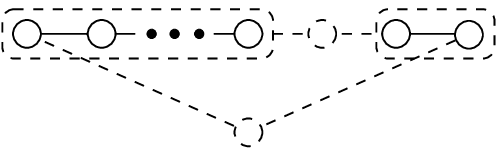}
  \\ \hline
$C_n$ & \includegraphics{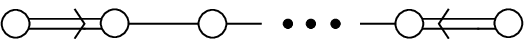} & \includegraphics{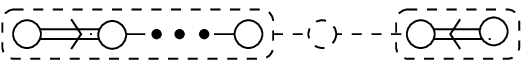}
  \\ \hline
$D_n$ & \includegraphics{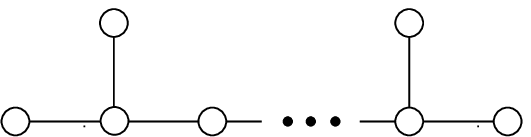} & \includegraphics{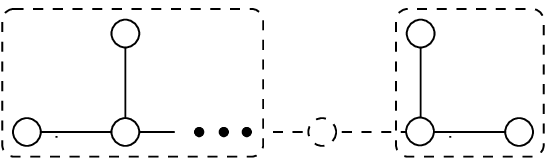}
  \\ \hline
\end{tabular}
}
\caption{Subalgebras $\af,\;\af_{\bot }$ for the classical series}
\end{table}

In the case of $B$ series the situation is different. The reason is that
here the subalgebra $\af_{\bot }$ may be larger than the one obtained
by elimination of the subdiagram of $\af$ and the adjacent nodes. The
subalgebras of the orthogonal pair,$\ \af$ and $\af_{\bot }$, must
not form a direct sum in $\frak{g}$ . It can be directly
checked that when $\frak{g}=B_{r}$ and $\af=B_{r_{\af}}$ the
orthogonal subalgebra is $\af_{\bot }=B_{r-r_{\af}}$. Consider the
injection $B_{r_{\af}}\rightarrow B_{r},\quad 1<r_{\af}<r$. By
eliminating the simple root $\alpha _{r_{\af}-1}=e_{r_{\af%
}-1}-e_{r_{\af}}$ one splits the extended Dynkin diagram of $B_{r}$
into the disjoint diagrams for $\af=B_{r_{\af}}$ and $D_{r-r_{%
\af}}$. But the system $\Delta _{\af_{\perp }}$ contains not only
the simple roots $\left\{ e_{1}-e_{2},e_{2}-e_{3},\ldots ,e_{r_{\af%
}-2}-e_{r_{\af}-1},e_{1}+e_{2}\right\} $ but also the root $e_{r_{\frak{%
a}}-1}$. Thus $\Delta _{\af_{\perp }}$ forms the subsystem of the type $%
B_{r-r_{\af}}$ and the orthogonal pair for the injection $B_{r_{\af%
}}\rightarrow B_{r}$ is $\left( B_{r_{\af}},B_{r-r_{\af}}\right) $%
. In the next Section the particular case of such orthogonal pair is
presented for the injection $B_{2}\rightarrow B_{4}$ (see Figure \ref
{fig:dynkin}).

The complete classification of regular subalgebras for affine Lie algebras
can be found in the recent paper \cite{1751-8121-41-36-365204}. From the complete
classification of maximal special subalgebras in classical Lie algebras \cite
{dynkin1952semisimple} we can deduce the following list of pairs of
orthogonal subalgebras $\af,\;\af_{\bot }$:
\begin{equation*}
\begin{array}{lll}
su(p)\oplus su(q) & \subset su(pq) &  \\
so(p)\oplus so(q) & \subset so(pq) &  \\
sp(2p)\oplus sp(2q) & \subset so(4pq) &  \\
sp(2p)\oplus so(q) & \subset sp(2pq) &  \\
so(p)\oplus so(q) & \subset so(p+q) & \mathrm{{for}\;p\;{and}\;q\;{odd}.}
\end{array}
\end{equation*}

\subsection{Algorithm for recursive computation of branching coefficients}

\label{sec:algorithm}

The recurrent relation (\ref{recurrent-relation}) allows us to formulate an
algorithm for recursive computation of branching coefficients. In this
algorithm there is no need to construct the module $L^{(\mu)}_{\frak{g}}$ or
any of the modules $L^{(\nu)}_{\af}$.

It contains the following steps:

\begin{enumerate}
\item  Construct the root system $\Delta _{\af}$ for the embedding $%
\af\rightarrow \frak{g}$.

\item  Select the positive roots $\alpha \in \Delta ^{+}$ orthogonal
to  $\af$, i.e. form the set $\Delta _{\afb }^{+}$.

\item  Construct the set $\Gamma _{\af\rightarrow \frak{g}}$. The relation
 (\ref{eq:6}) defines the sign function
 $s(\gamma)$ and the set $\Phi_{\af\subset \frak{g}}$ where the lowest weight
 $\gamma_0$ is to be subtracted to get the fan (\ref{fan-defined}):
 $\Gamma _{\af\rightarrow \frak{g}}=\left\{ \xi -\gamma _{0}|\xi \in \Phi _{%
\af\subset \frak{g}}\right\} \setminus \left\{ 0\right\}$.

\item  Construct the set $\widehat{\Psi ^{(\mu )}}=\left\{ w (\mu +\rho
)-\rho ;\;w \in W\right\} $ of singular weights for the $\frak{g}$%
-module $L^{(\mu )}$.

\item  Select the weights $\left\{ \mu _{\widetilde{\af_{\perp }}%
}\left( w\right) =\pi _{\widetilde{\af_{\perp }}}\left[ w(\mu +\rho
)-\rho \right] -\mathcal{D}_{\af_{\perp }}\in \overline{C_{\widetilde{%
\af_{\perp }}}}\right\} $. Since the set $\Delta _{\bot }^{+}$ is fixed
we can easily check wether the weight $\mu _{\widetilde{\af_{\perp }}%
}\left( w\right) $ belongs to the main Weyl chamber $\overline{C_{\widetilde{%
\af_{\perp }}}}$ (by computing its scalar product with the fundamental
weights of $\afb^{+}$).

\item  For the weights $\mu _{\widetilde{\af_{\perp }}}\left( w\right) $
calculate the dimensions of the corresponding modules $\mathrm{\dim }\left(
L_{\widetilde{\af_{\perp }}}^{\mu _{\widetilde{\af_{\perp }}%
}\left( u\right) }\right) $ using the Weyl dimension formula and construct
the singular element $\Psi ^{\left( \mu \right) }_{\left(  \af, \afb \right)}$.

\item  Calculate the anomalous branching coefficients using the
recurrent relation (\ref{recurrent-relation}) and select among them those
corresponding to the weights in the main Weyl
chamber $\overline{C_{\af}}$.
\end{enumerate}

We can speed up the algorithm by
one-time computation of the representatives of the conjugate classes $%
W/W_{\afb }$.

The next section contains examples illustrating the application of this
algorithm.

\section{Branching for finite dimensional Lie algebras}
\label{sec:finite-dimens-lie}

\subsection{Regular embedding of $A_1$ into $B_2$}
\label{sec:regul-embedd-a_1}

Consider the regular embedding $A_1\to B_2$. Simple roots $\alpha_1, \alpha_2$ of $B_2$ are presented as the dashed vectors in the Figure \ref{fig:B2_A1}. We denote the corresponding Weyl reflections by $w_1, w_2$. The simple root $\beta = \alpha_1+2\alpha_2$ of $A_1$ is indicated as the grey vector.

\begin{figure}[p]
  \noindent\centering{
    \includegraphics[width=80mm]{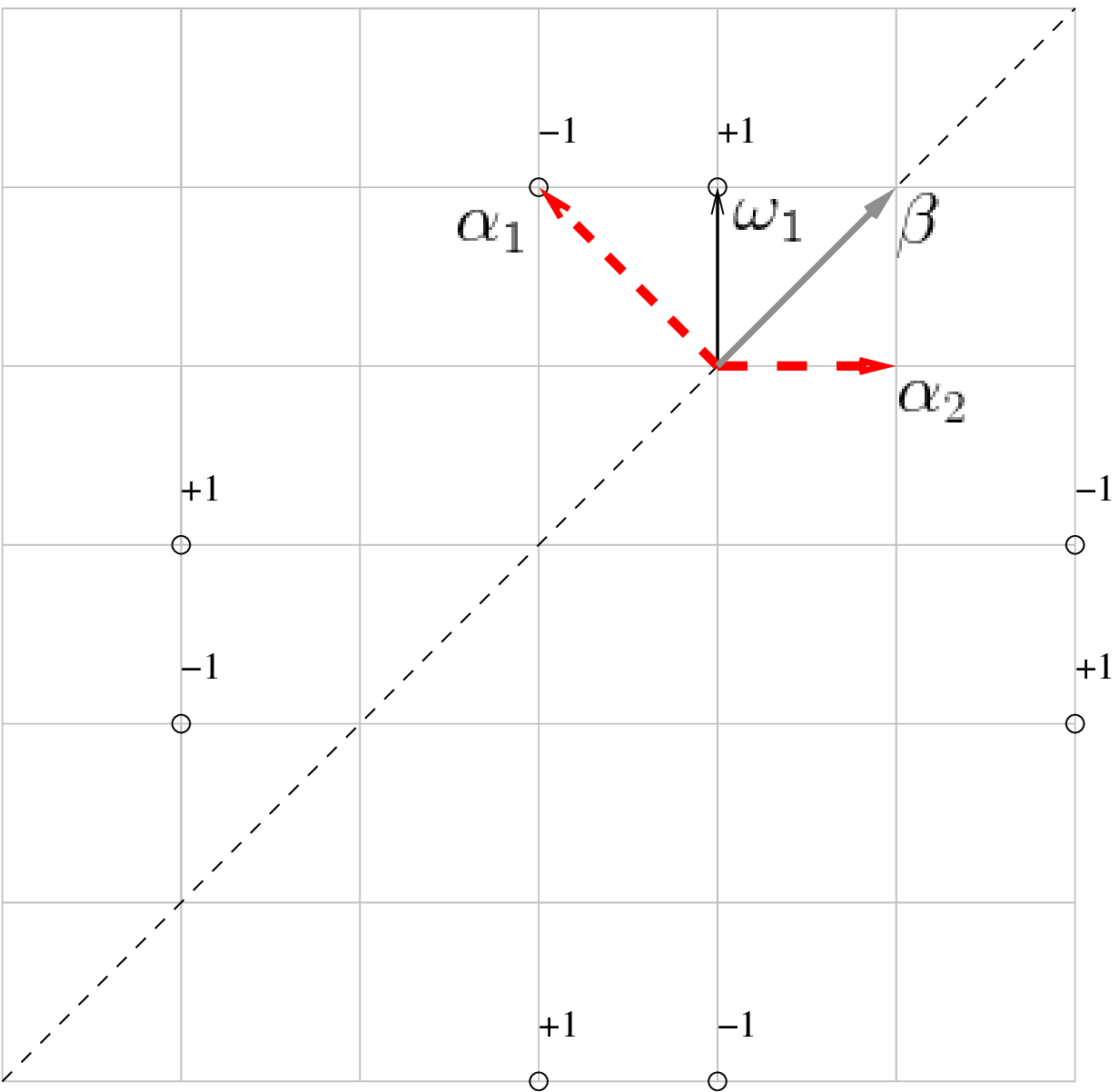}
  }
  \caption{Regular embedding of $A_1$ into $B_2$. Simple roots $\alpha_1, \alpha_2$ of $B_2$ are presented as the dashed vectors.
    The simple root $\beta = \alpha_1+2\alpha_2$ of $A_1$ is indicated as the grey vector.
    The highest weight of the fundamental representation $L^{(1,0)=\omega_1}_{B_2}$ is shown by the black vector.
    The weights of the singular element $\Psi^{(\omega_1)}$ are marked by circles with superscripts indicating
    the corresponding determinants $\epsilon(w)$.}
  \label{fig:B2_A1}

  \noindent\centering{
    \includegraphics[width=80mm]{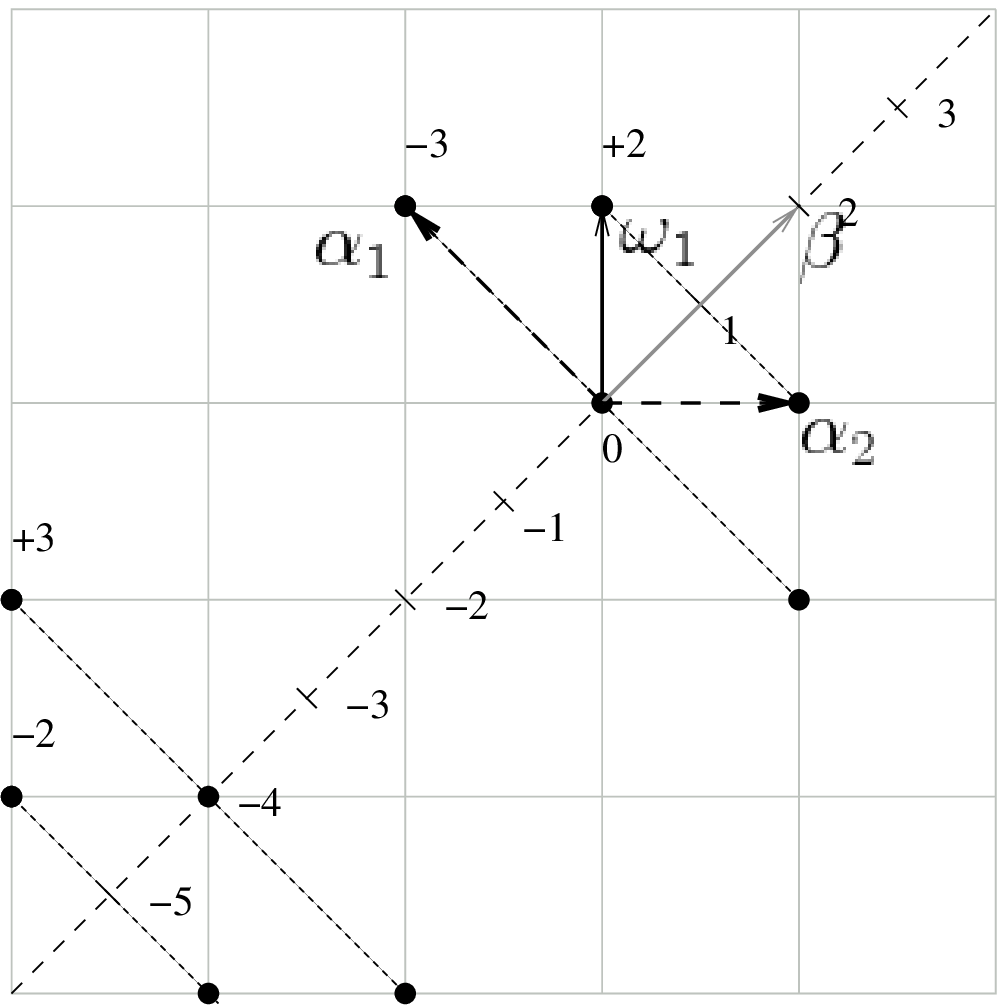}
  }
  \caption{Here in addition to the diagram presented above (Figure(\ref{fig:B2_A1}))
  the weights of $\left( \afb=A_1 \right)$-modules $L_{\af_{\perp }}^{\mu_{\af_{\perp }}\left( u\right) }$
  originating in the points $\pi _{\af}\left[ u(\mu +\rho )-\rho \right] $
are shown by dotted lines.
    The superscripts over the highest weights  $\mu_{\af_{\perp }}\left( u\right)$
    are now the products $\epsilon(u)\dim\left(L_{\af_{\perp }}^{\mu_{\af_{\perp }}\left( u\right) }\right)$.
    Coordinates along the root $\beta$ are counted in terms of the fundamental weight of $\af$. }
\label{fig:B2_A1_2}
\end{figure}

Let's perform the reduction of the fundamental representation $L^{(1,0)=\omega_1}_{B_2}$
($\omega_1$ -- the black vector in Figure \ref{fig:B2_A1}). The root $\alpha_1$ is orthogonal to $\beta$,
so we have $\Delta_{\perp}^+ = \left\{ \alpha_1 \right\}$.
According to the definition (\ref{fan-definition})
the fan $\Gamma_{A_1\to B_2}$ consists of two weights:
\begin{equation*}
  \label{eq:22}
  \Gamma_{A_1\to B_2}=\left\{ (1;2),\; (2;-1) \right\},
\end{equation*}
where the second component is the value of the sign function $s(\gamma)$.
The singular weights  $\left\{ w (\omega_1 +\rho)-\rho ;\;w \in W\right\}$
are indicated by circles with the superscript $\epsilon\left( w \right)$.
The space $U$ is the factor $W/W_{\afb}$ where $W_{\afb}=\left\{e,w_1\right\}$.
This means that the singular weights located above the line generated by $\beta$
belong to the Weyl chamber $\overline{C_{\widetilde{\af_{\perp }}}}$.
According to formula (\ref{defect-perp}) in our case we have $\mathcal{D}_{\af_{\perp }}=0$ and
$\hf_{\perp }=0$, thus $\left\{ \mu _{\af_{\perp }}\left( w\right)
=\pi _{\af_{\perp }}\left[ w(\mu +\rho)-\rho \right]\right\}$.
We obtain four highest weights for $\af_{\perp }$-modules. In terms of
$\af_{\perp }$-fundamental weight $\frac{1}{2} \alpha_1$ these highest weights
$\left\{ \mu _{\af_{\perp }}\left( u\right)
=\pi _{\af_{\perp }}\left[ u(\mu +\rho)-\rho \right]| u \in U \right\}$
 are
$\left\{ \left( 1\right) \left( 2\right) \left( 2\right) \left( 1\right) \right\}$. To
visualize the procedure we indicate explicitly in Figure (\ref{fig:B2_A1_2}) how the
corresponding weight diagrams
$\left\{ \mathcal{N}_{\af_{\perp }}^{\mu _{\af_{\perp }}\left( u\right) }\right\} $
are attached to the set of $\af$-weights
$\left\{ \mu _{\af}\left( u\right)\right\} =\left\{\pi _{\af}\left[ u(\mu +\rho )-\rho \right]\right\}
=\left\{ \left( 1\right) \left( 0\right) \left( -4\right) \left( -5\right) \right\}$.
In fact we do not need the
weight diagrams but only the dimensions of the corresponding modules
$L_{\af_{\perp }}^{\mu_{\af_{\perp }}\left( u\right) }$ multiplied by
$\epsilon \left( u\right) $. The obtained values are to be attributed to the points
$\left\{ \left( 1\right) \left( 0\right) \left( -4\right) \left( -5\right) \right\}$
in $P_{\af}$. The corresponding element of ${\cal E}_{\af}$ is
the singular element $\Psi ^{\left( \mu \right) }_{\left(  \af, \afb \right)}$
with the set of weights having anomalous multiplicities:
\begin{equation}
  \label{eq:25}
  \left\{(1;2),\; (0;-3),\; (-4;3),\; (-5;-2)\right\}.
\end{equation}

Applying formula (\ref{recurrent-relation}) with the fan
$\Gamma_{A_1\to B_2}$ to the set (\ref{eq:25})
we get zeros for the weights
greater than the highest anomalous vector $(1;2)$
and $k^{(1,0)}_1=2$ for the vector $(1;2)$ itself.
For the anomalous weight (0;-3) on the boundary of $\bar{C}^{(0)}_{\af}$ the recurrent relation gives
\begin{equation*}
  \label{eq:23}
  k^{(1,0)}_{0}=-1\cdot k^{(1,0)}_2 +2\cdot k^{(1,0)}_1 - 3\cdot \delta_{0,0} = 1,
\end{equation*}
the branching is completed: $L_{B_2\downarrow A_1}^{\omega_1}=
2L_{A_1}^{\omega_{\left(A_1\right)} }
\bigoplus
L_{A_1}^{2\omega_{\left(A_1\right)} }$.

\subsection{Embedding $B_2$ into $B_4$}
\label{sec:someth-high-dimens}
Consider the regular embedding $B_2 \rightarrow B_4$.
The corresponding Dynkin diagrams are presented in the Figure \ref{fig:dynkin}.
\begin{figure}[h]
  \centering
  \includegraphics[width=50mm]{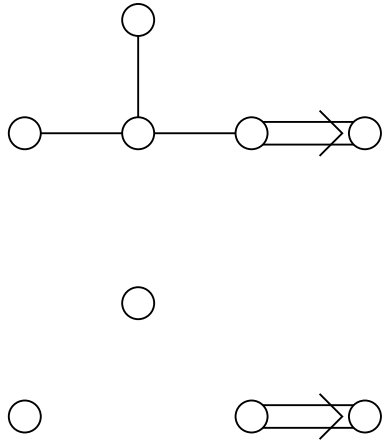}
  \caption{The regular embedding $B_2 \rightarrow B_4$ described by dropping the node from the Dynkin diagram.
  Remember that here $\afb$ is equal to $B_2$ while the diagram
  shows only $A_1\oplus A_1$ (see Subsection \ref{sect-embeddings}).}
  \label{fig:dynkin}
\end{figure}

\begin{figure}[pt]
  \centering
    \includegraphics[width=100mm,height=90mm]{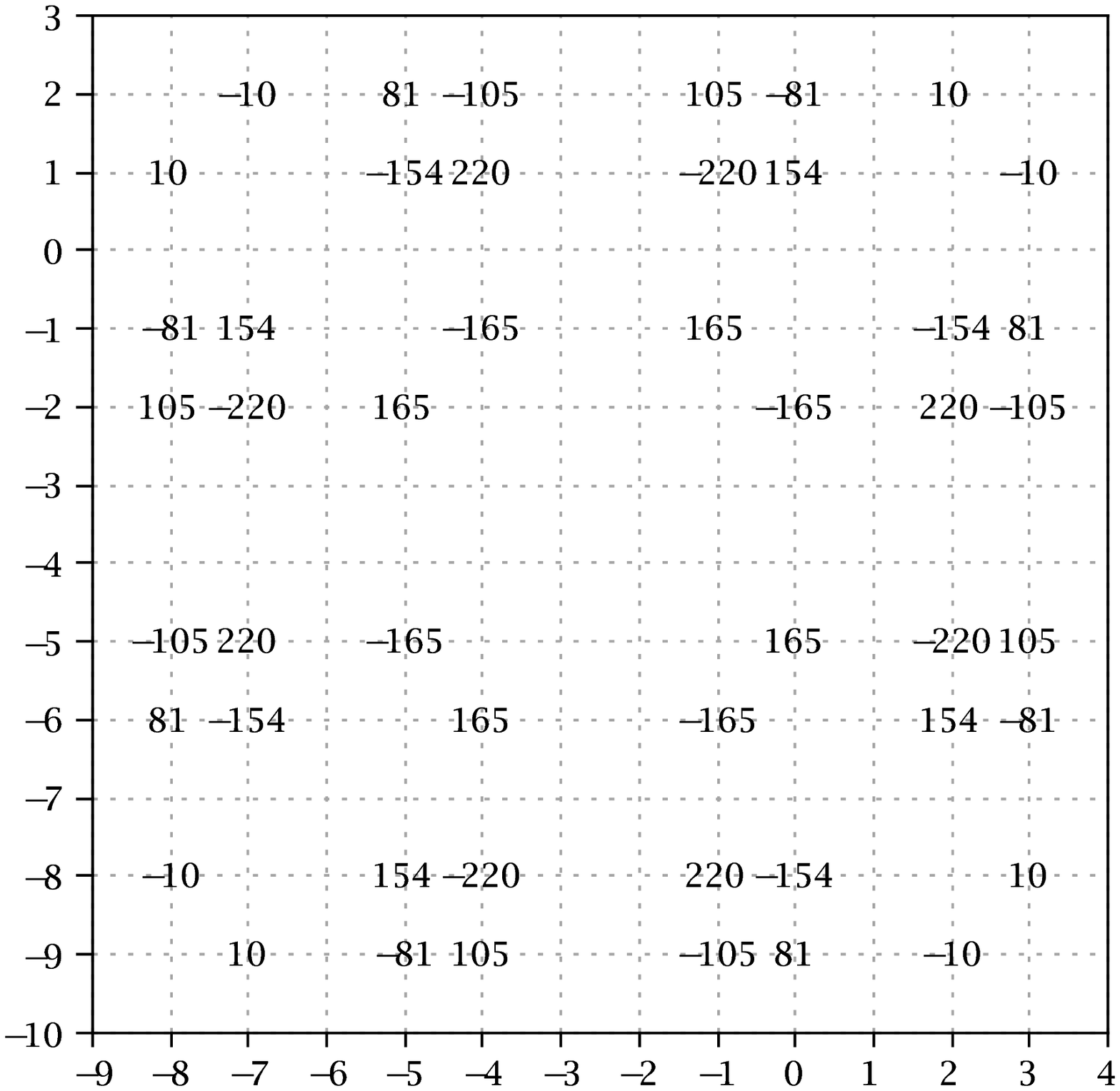}
  \caption{The singular element $e^{\gamma_0}\Psi ^{\left( \mu \right) }_{\left(  \af, \afb \right)}$
  displayed in the weight subspace $P_{\af}$ for $\af=B_2$ with the basis $\left\{e_3,e_4\right\}$.
  We see the projected singular weights $\left\{\pi _{\af}\left[ u(\mu +\rho )-\rho \right] +\gamma_0 | u \in U \right\}$
  shifted by $\gamma_0$ and supplied by the multipliers
  $\epsilon(u)\dim\left(L_{\af_{\perp }}^{\mu_{\af_{\perp }}\left( u\right) }\right)$.}
  \label{fig:B4B2anom}

%
  \centering
  \includegraphics[height=80mm]{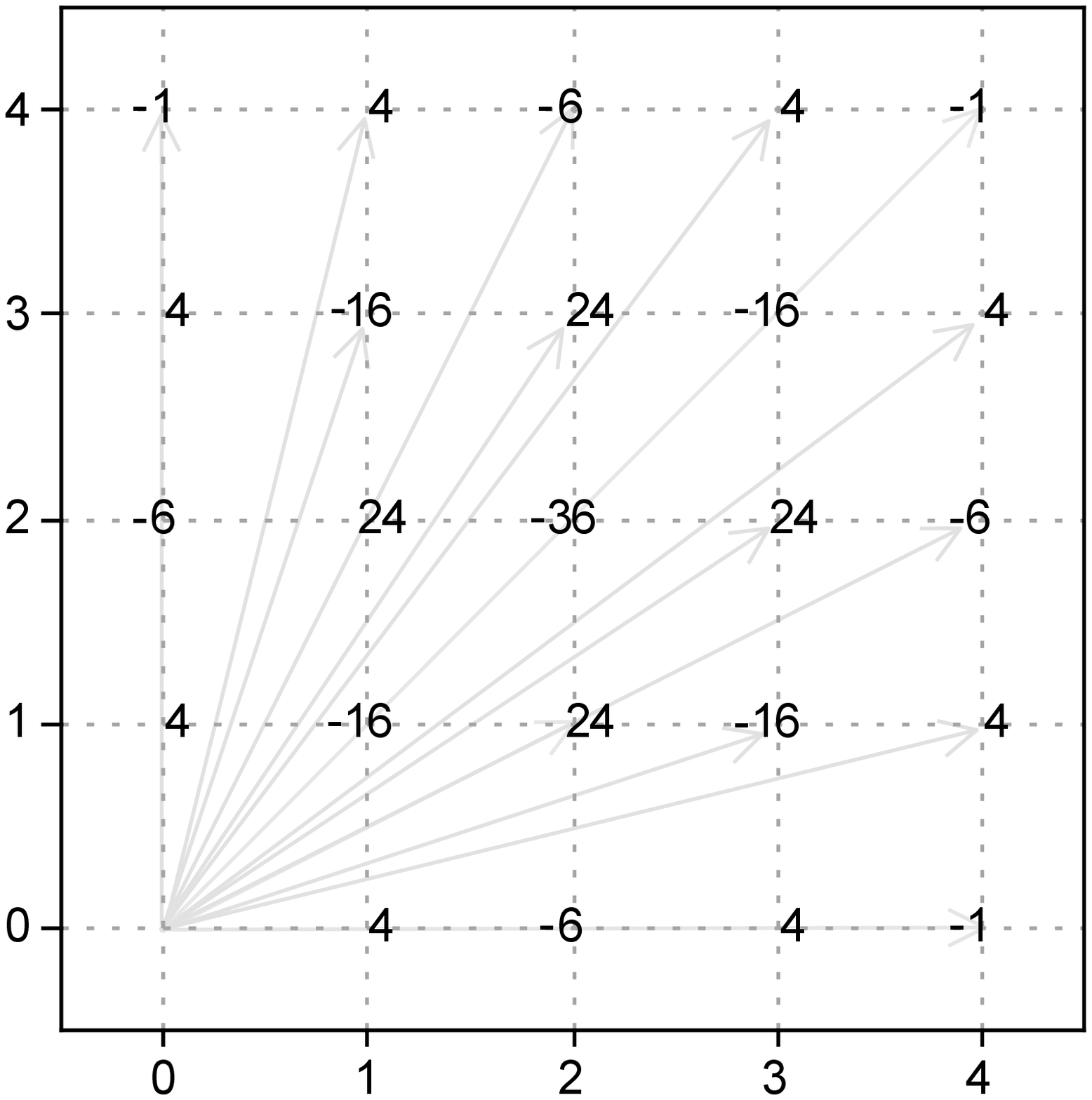}
  \caption{The fan $\Gamma$ for $B_2\rightarrow B_4$ and the values of $s(\gamma+\gamma_0)$ for the weights $\gamma$.}
  \label{fig:B4B2Fan}
\end{figure}

In the orthonormal basis $\left\{e_1,\dots,e_4\right\}$ simple roots and positive roots of $B_4$ are
\begin{eqnarray*}
  \label{eq:19}
  S_{B_4}= \{e_1 - e_2,\; e_2 - e_3,\; e_3 - e_4,\; e_4\},\\[2mm]
 \Delta^+_{B_4}=\left\{ (e_1 - e_2,\; e_2 - e_3,\; e_3 - e_4,\; e_4,\; e_1 - e_3,\; e_2 - e_4,\; e_3 + e_4,\; e_3,\; e_1 - e_4,\;\right.\\
 \left. e_2 + e_4,\; e_2,\; e_1 + e_4,\; e_2 + e_3,\; e_1,\; e_1 + e_3,\; e_1 + e_2\right\}
\end{eqnarray*}
The subalgebra $\af=B_2$ is fixed by the simple roots
\begin{equation*}
  \label{eq:26}
 S_{B_2}=\{e_3-e_4,e_4\}
\end{equation*}
Its orthogonal counterpart $\afb=B_2$ has
\begin{eqnarray*}
  \label{eq:27}
  S_{\afb}=\{e_1-e_2,e_2\},\\
 \Delta^{+}_{\afb}= \left\{e_1-e_2,e_1+e_2,e_1,e_2\right\}.
\end{eqnarray*}
As far as the set $\Delta^+_{B_4} \setminus  \Delta^{+}_{\afb}$ is fixed the injection
fan $\Gamma_{B_2 \to B_4}$ can be constructed using Definition \ref{fan-definition}.
As far as for this injection $s\left( \gamma_0\right)=-1$ in the recursion formula we need just the factor
 $s\left(\gamma + \gamma_0\right)$.
The result is presented in Figure \ref{fig:B4B2Fan}.

Consider the $B_4$-module $L^{\mu}$ with the highest weight $\mu=2e_1 + 2 e_2 + e_3 + e_4$; \,
$\mathrm{dim}(L^{\left[0,1,0,2\right]})=2772$.
The set of singular weights for $B_4$ contains 384 vectors.
Here the defect is nontrivial, $\mathcal{D}_{\af_{\perp }}=-2\left( e_1 + e_2 \right)$,
while $\hf_{\bot}=0$.
Taking this into account we find among the singular weights
48 vectors with the property $\left\{ \mu _{\af_{\perp }%
}\left( u\right) =\pi _{\af_{\perp }}\left[ u(\mu +\rho
)-\rho \right] -\mathcal{D}_{\af_{\perp }}\in \overline{C_{\af_{\perp }}}\right\} $,
scalar products of these weights with all the roots in $\Delta^{+}_{\afb}$ are nonnegative.
The set $U=\left\{ u \right\}$ is thus fixed.
Compute the dimensions of the corresponding $\afb$-modules with the highest weights
$ \mu _{\af_{\perp }}\left( u\right)$ (using the Weyl dimension formula) and multiply them
by $\epsilon\left( u \right)$. The result is the singular element
$\Psi ^{\left( \mu \right) }_{\left(  \af, \afb \right)}$ shown in Figure \ref{fig:B4B2anom}.

Now one can place the fan $\Gamma$ from Figure \ref{fig:B4B2Fan} in the highest of the weights
presented in Figure \ref{fig:B4B2anom} and start the recursive determination of the branching coefficients
(using relation (\ref{recurrent-relation})):
\begin{eqnarray*}
  \label{eq:24}
  \pi_{\af} \left(ch L^{\left[0,1,0,2\right]}_{B_4}\right) = 6 \; ch L^{\left[0,0\right]}_{B_2}+ 60
  \; ch L_{B_2}^{\left[0,2\right]}+ 30 \; ch L_{B_2}^{\left[1,0\right]}+ 19 \; ch L_{B_2}^{\left[2,0\right]}+\\
  40 \; ch L_{B_2}^{\left[1,2\right]}+ 10 \; ch L_{B_2}^{\left[0,4\right]}.
\end{eqnarray*}
\section{Applications to the conformal field theory}
\label{sec:phys-appl}

\subsection{Conformal embeddings}
\label{sec:conformal-embeddings}

Branching coefficients for an embedding of affine Lie algebra into
affine Lie algebra can be used to construct modular invariant
partition functions for Wess-Zumino-Novikov-Witten models in conformal field theory
(\cite{difrancesco1997cft}, \cite{Walton:1999xc}, \cite{walton1989conformal}, \cite{schellekens1986conformal}).
In these models current algebras are affine Lie algebras.

The modular invariant partition function is crucial for the conformal theory to be valid
on the torus and higher genus Riemann surfaces. It is important for the applications of
CFT to string theory and to critical phenomena description.

The simplest modular-invariant partition function has the diagonal form:
\begin{equation}
  \label{eq:34}
   Z(\tau)=\sum_{ \mu\in P^{+}_{\mathfrak{g}}} \chi_{\mu}(\tau)\bar \chi_{\mu}(\bar \tau)
\end{equation}
Here the sum is over the set of the highest weights of integrable modules in a WZW-model
and $\chi_{\mu}(\tau)$ are the normalized characters (see \cite{difrancesco1997cft}) of these modules.

To construct the nondiagonal modular invariants is not an easy problem,
although for some models the complete classification of modular invariants is known \cite{1994hepthGannon,1995JMPGannon}.

Consider the Wess-Zumino-Witten model with the affine Lie algebra $\af$.
Nondiagonal modular invariants for this model can be constructed from the diagonal
invariant if there exists an affine algebra $\mathfrak{g}$ such that $\af\subset\mathfrak{g}$.
Then we can replace the characters of the $\mathfrak{g}$-modules in the diagonal
modular invariant partition function (\ref{eq:34})
by the decompositions
\begin{equation*}
  \label{eq:32}
\sum_{\nu \in P^{+}_{\af}}b^{(\mu)}_{\nu} \chi_{\nu}
\end{equation*}
containing normalized characters $\chi_{\nu}$ of the corresponding $\af$-modules.
Thus we obtain a nondiagonal modular-invariant  partition function for the theory with
the current algebra $\af$,
\begin{equation}
  \label{eq:36}
   Z_{\af}(\tau)=\sum_{ \nu,\lambda\in P^{+}_{\af}} \chi_{\nu}(\tau)M_{\nu\lambda}\bar \chi_{\lambda}(\bar \tau).
\end{equation}

The effective reduction procedure is crucial for this construction.
The embedding is required to preserve the conformal invariance.
Let $X^{\alpha_j}_{-n_j}$ and $\tilde{X}^{\alpha'_j}_{-n_j}$ be the lowering generators for
$\mathfrak{g}$ and for $\af\subset\mathfrak{g}$ correspondingly.
Let $\pi_{\af}$ be the projection operator of
$\pi_{\af}:\mathfrak{g}\longrightarrow \af$.
In the theory attributed to $\mathfrak{g}$ with the vacuum $\left|\lambda\right>$
the states can be described as
\begin{equation*}
  \label{eq:109}
  X^{\alpha_1}_{-n_1}X^{\alpha_2}_{-n_2}\dots\left|\lambda\right>\quad n_1\geq n_2\geq \dots>0.
\end{equation*}
And for the sub-algebra $\af$ the corresponding states are
\begin{equation*}
  \label{eq:110}
  \tilde{X}^{\alpha'_1}_{-n_1}\tilde{X}^{\alpha'_2}_{-n_2}\dots\left|\pi_{\af}(\lambda)\right>.
\end{equation*}
The $\mathfrak{g}$-invariance of the vacuum entails its $\af$-invariance,
but this is not the case for the energy-momentum tensor. So the energy-momentum tensor of the larger theory
should contain only the generators $\tilde{X}$. Then the relation
\begin{equation}
  \label{eq:2}
  T_{\mathfrak{g}}(z)=T_{\af}(z)
\end{equation}
leads to the equality of central charges
\begin{equation*}
  \label{eq:33}
  c(\mathfrak{g})=c(\af)
\end{equation*}
and to the relation
\begin{equation}
  \label{eq:111}
  \frac{k\;\mathrm{dim}\,\mathfrak{g}}{k+g}=\frac{x_e k\; \mathrm{dim}\,\af}{x_ek+a}.
\end{equation}
Here $x_e$ is the so called "embedding index":
$x_e=\frac{\left|\pi_{\mathfrak{a}} \Theta\right|^2}{\left|\Theta_{\mathfrak{a}}\right|^2}$
with $\Theta$, $\Theta_{\mathfrak{a}}$ being the highest roots of
$\mathfrak{g}$ and $\mathfrak{a}$
while $g$  and $a$ are the  corresponding dual Coxeter numbers.

It can be demonstrated that solutions of equation (\ref{eq:111}) exist only
for the level $k=1$ \cite{difrancesco1997cft}.

The complete classification of conformal embeddings is given in \cite{schellekens1986conformal}.

The relation (\ref{eq:111}) and the asymptotics of the branching functions can be used
to prove the finite reducibility theorem \cite{kac1988modular}.
It states that for a conformal embedding  $\af\longrightarrow\mathfrak{g}$
only finite number of branching coefficients have nonzero values.

\begin{mynote} The orthogonal subalgebra $\afb$ is always trivial
for conformal embeddings $\af\longrightarrow \mathfrak{g}$.
\begin{proof}
Consider the modes expansion of the energy-momentum tensor
\begin{equation*}
\label{eq:47}
  T(z)=\frac{1}{2(k+h^v)}\sum_n z^{-n-1}L_n.
\end{equation*}
The modes $L_n$ are constructed as combinations of normally-ordered products of the generators of $\mathfrak{g}$,
\begin{equation*}
\label{eq:48}
  L_n=\frac{1}{2(k+h^v)}\sum_{\alpha}\sum_m:X^{\alpha}_m X^{\alpha}_{n-m}: \; .
\end{equation*}
In case of a conformal embedding energy-momentum tensors $T_{\mathfrak{g}}(z)$ and $T_{\af}(z)$ are
equal (see (\ref{eq:2})).

Substituting generators of $\af$  in terms of generators of $\mathfrak{g}$ into these combinations
we must obtain the energy-momentum tensor $T_{\mathfrak{g}}$.
But if the set of generators attributed to $\Delta_{\afb}$ is not empty this is not possible,
since $T_{\mathfrak{g}}$ contains generators $X^{\alpha}_n$ for $\alpha\in \Delta_{\afb}$.
\end{proof}
\end{mynote}

\subsubsection{Special embedding $\hat{A}_1\rightarrow\hat{A}_2$.}
\label{sec:spec-embedd-hata_1s}

Consider the case where both $\gf$ and $\af$ are affine Lie algebras:
$\hat{A}_1 \rightarrow \hat{A}_2$ and the injection is the affine extension of the
special injection $A_1 \rightarrow A_2$ with the embedding index $x_e=4$.
As far as the $\gf$-modules to be considered are of level one,
the $\af$-modules will be of level $\tilde{k}=kx_e=4$.

There exist three level one fundamental weights of $\hat{A}_2$.
It is easy to see that the set $\Delta_{ \afb }$ is empty and the subalgebra $\afb=0$. Then ${\cal D}_{\afb}=0$, $\hf_{\perp}$ is one-dimensional Abelian subalgebra and the dimensions of $\tilde\afb=\afb\oplus \hf_{\perp}$ are equal to 1.
It is convenient to choose the classical root for $\hat{A}_1$ to be
$\beta=\frac{1}{2}(\alpha_1+\alpha_2)$.

Using Definition (\ref{fan-definition})  we construct the fan $\Gamma_{\hat A_1\to\hat A_2}$.
In this case $\gamma_0 =0$ and its sign $s\left( 0 \right)=-1$ thus we are to use the
sign function $s(\gamma)$ (see Figure \ref{fig:AffineA2A1Fan}).

\begin{figure}[h!bt]
  \centering
  \includegraphics[width=125mm]{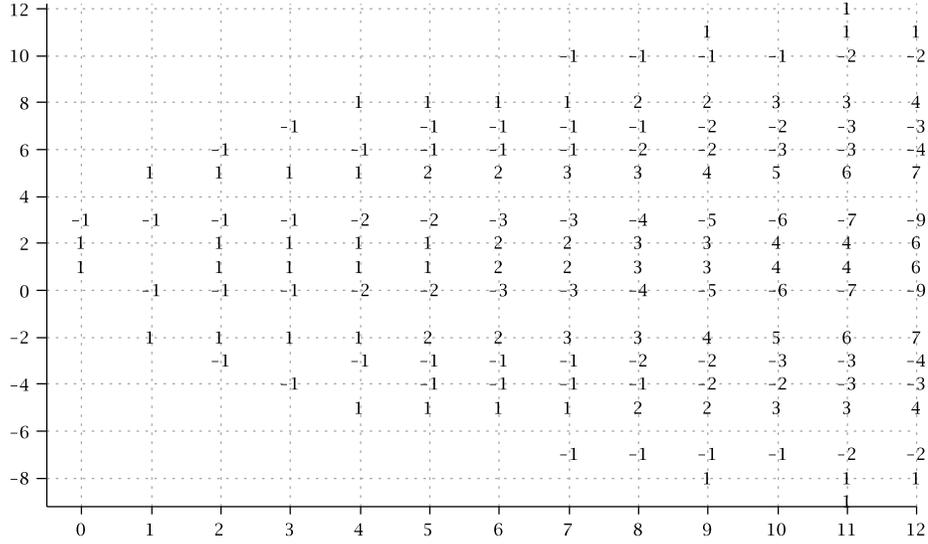}

  \caption{The fan $\Gamma_{\hat{A_1}\rightarrow \hat{A_2}}$ for $\hat{A_1}\rightarrow \hat{A_2}$
  in the basis $\left\{\beta,\delta \right\}$. Notice that $\gamma_0 =0$, so values of $s(\gamma)$
  are prescribed to the weights $\gamma\in \Gamma_{\hat{A_1}\rightarrow \hat{A_2}}$}
  \label{fig:AffineA2A1Fan}
\end{figure}

Consider the module $L^{\omega_0=(0,0;1;0)}$. Here we use the (finite part; level; grade)
presentation of the highest weight and the finite part
coordinates are the Dynkin indices (see section(\ref{sec:notation})).

The set $\widehat{\Psi^{(\omega_0)}}$  is displayed in Figure
\ref{fig:affine_A2_anom_point} up to the sixth grade.

\begin{figure}[h!tb]
  \hspace*{-1.5cm}
  \includegraphics[width=180mm]{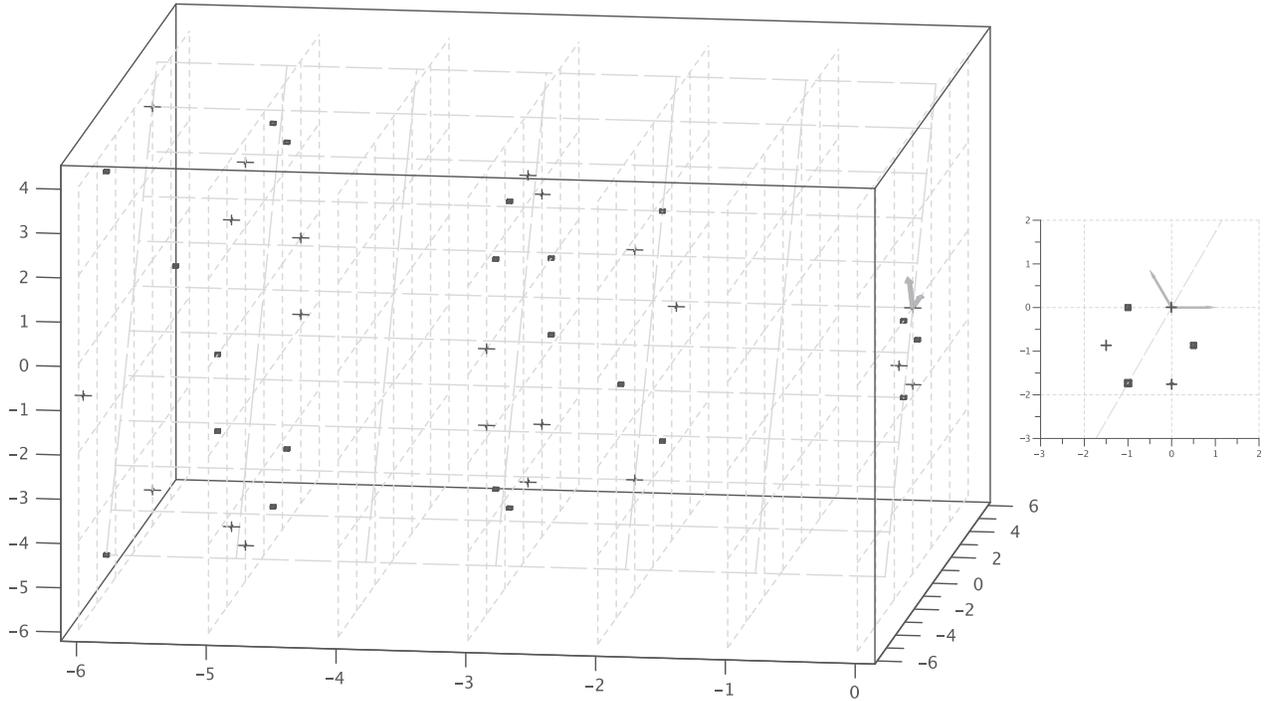}
  \caption{The singular weights of the module $L_{\hat{A_2}}^{\omega_0}=L^{(0,0;1;0)}_{\hat{A_2}}$.
   The classical (grade zero) cross-section of the diagram is shown separately
   in the right part of the figure.
  We use the orthogonal basis with the unit vector equal to $\alpha_1$.
  The weights $w (\omega_0+\rho)-\rho$ are marked by crosses when $\epsilon(w)=1$ and
by box when $\epsilon(w)=-1$. Simple roots of the classical subalgebra $A_2$ are
grey and the grey diagonal plane corresponds to the Cartan subalgebra of
the embedded algebra $\hat{A}_1$.}
  \label{fig:affine_A2_anom_point}
\end{figure}

The next step is  to project the anomalous weights to $P_{\hat A_1}$.
The result is the element $\Psi ^{\left( \omega_0 \right) }_{\left(  \hat A_1\, , \, \afb=0 \right)}$
presented in Figure \ref{fig:AffineA2_A1_anom_proj} up to the twelfth grade.
\begin{figure}[h!tb]
  \centering
  \includegraphics[width=130mm]{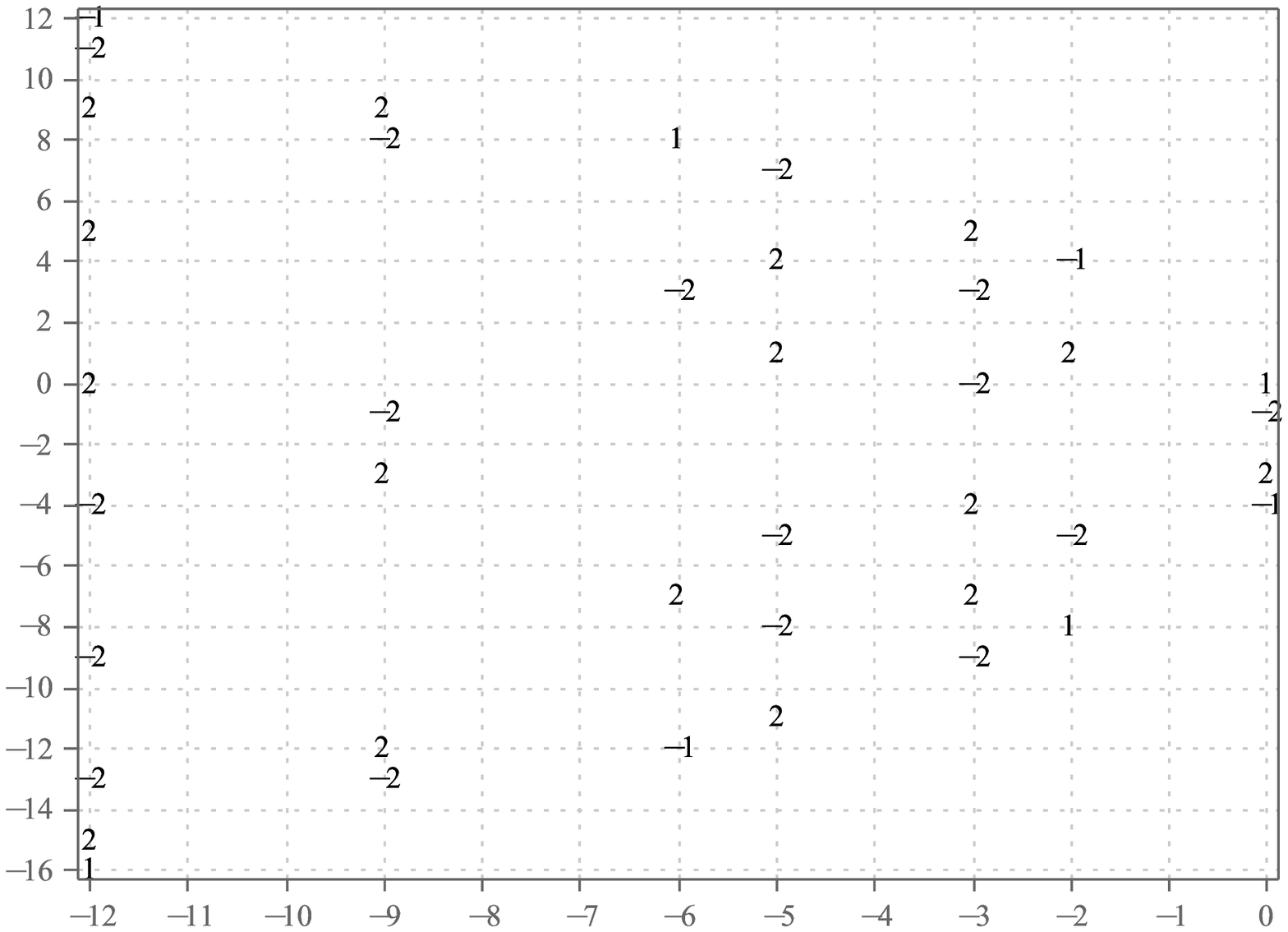}
  \caption{The singular element $\Psi ^{\left( \omega_0 \right) }_{\left(  \hat A_1\, , \, \afb=0 \right)}$
  displayed in $P_{\hat A_1}$
  with the basis $\left\{\beta,\delta \right\}$.}
  \label{fig:AffineA2_A1_anom_proj}
\end{figure}

Using the recurrent relation (\ref{recurrent-relation}) with the fan
$\Gamma_{\hat{A_1}\rightarrow \hat{A_2}}$ and the singular weights in
$\Psi ^{\left( \omega_0 \right) }_{\left(  \hat A_1\, , \, \afb=0 \right)}$
we get the anomalous branching coefficients presented
in Figure \ref{fig:AffineA2_A1_branching}.
\begin{figure}[h!tb]
  \centering
  \includegraphics[width=130mm]{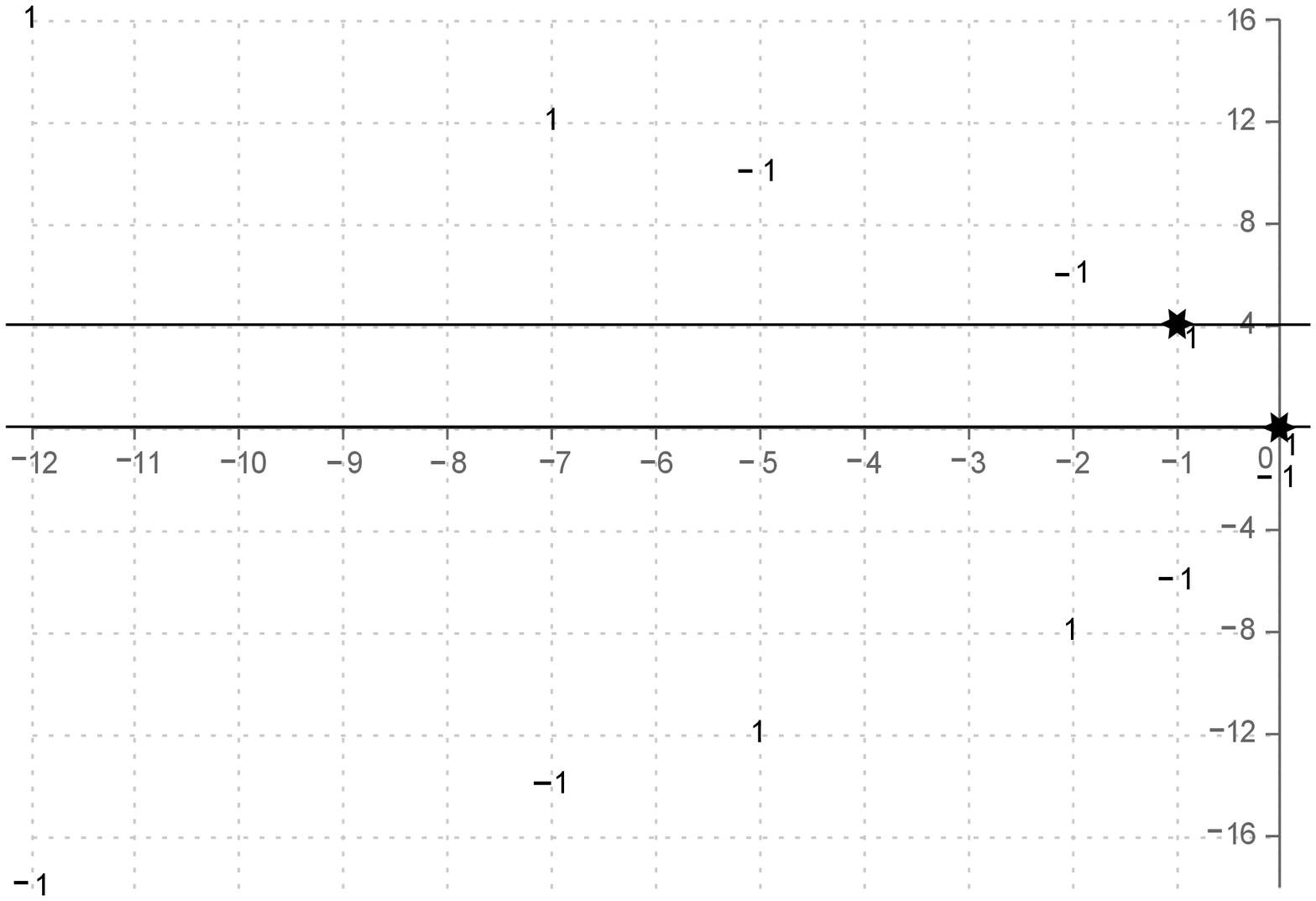}
  \caption{Anomalous branching coefficients for $\hat{A_1}\subset \hat{A_2}$. The boundaries
  of the main Weyl chamber $\bar{C}_{\hat{A}_1}$
 are indicated by black lines. Two anomalous highest weights located
 in the main Weyl chamber are marked by stars.
 Both have multiplicity 1, so the branching coefficients for them are equal 1.}
  \label{fig:AffineA2_A1_branching}
\end{figure}
Inside the Weyl chamber $\bar{C}_{\hat{A}_1}$
(its boundaries are indicated in Figure \ref{fig:AffineA2_A1_branching})
there are only two nonzero anomalous weights and both have multiplicity 1.
These are the highest weights of $\af$-submodules and the multiplicities are their branching
coefficients. Thus we get the decomposition
\begin{equation*}
  \label{eq:43}
  L^{(0,0;1;0)}_{\hat{A_2}\downarrow \hat{A_1}}= L_{\hat{A_1}}^{(0;4;0)}\oplus L_{\hat{A_1}}^{(4;4;0)}.
\end{equation*}
The finite reducibility theorem holds.

The same fan $\Gamma_{\hat{A_1}\rightarrow \hat{A_2}}$ can be used for any other highest weight
module $L^{\mu}_{\hat{A_2}}$. In particular for the irreducible modules of level one  we get the trivial
branching:
\begin{equation*}
  \label{eq:44}
   L^{(1,0;1;0)}_{\hat{A_2}\downarrow \hat{A_1}}= L_{\hat{A_1}}^{(2;4;0)},\\
   L^{(0,1;1;0)}_{\hat{A_2}\downarrow \hat{A_1}}= L_{\hat{A_1}}^{(2;4;0)}.
\end{equation*}

Using these results the modular-invariant partition function is easily found,
\begin{equation*}
  \label{eq:45}
  Z=\left|\chi_{(4;4;0)}+\chi_{(0;4;0)}\right|^2+2\chi_{(2;4;0)}^2.
\end{equation*}

\subsection{Coset models}
\label{sec:coset-models}

Coset models \cite{Goddard198588} tightly connected with the gauged WZW-models are actively studied
in string theory, especially in string models on anti-de-Sitter space
\cite{Maldacena:2000hw,Maldacena:2000kv,Maldacena:2001km,Maldacena:2001ky,Aharony:1999ti}.
The characters in coset models are proportional to branching functions,
\begin{equation}
  \label{eq:31}
  \chi^{(\mu)}_{\nu}(\tau)=e^{2\pi i \tau (m_{\mu}-m_{\nu})} b^{(\mu)}_{\nu}(\tau),
\end{equation}
with
\begin{equation*}
  \label{eq:46}
  m_{\mu}=\frac{\left|\mu+\rho\right|^2}{2(k+g)}-\frac{\left|\rho\right|^2}{2g}.
\end{equation*}
The problem of branching functions construction in the coset models was considered
in  \cite{Dunbar:1992gh}, \cite{Hwang:1994yr}, \cite{lu1994branching}.

Let us return to the example \ref{sec:regul-embedd-a_1} and consider the affine extension of the injection
$A_1 \rightarrow B_2$.
Since this embedding is regular and $x_e=1$, the subalgebra modules and the initial module are of the same level.
The set  of positive roots with zero projection
on the root space of the subalgebra $\hat{A_1}$ is the same as in the finite-dimensional case
$\Delta^{+}_{\afb}=\left\{ \alpha_1 \right\}$ and $\afb=A_1$. It is easy to see that $\hf_{\perp}$
is trivial in this case and also ${\cal D}_{\afb}=0$.

Using the definition (\ref{fan-defined}) we obtain the fan
$\Gamma_{\hat{A_1} \longrightarrow  \hat{B_2} }$.
Notice that here the lowest weight  $\gamma_0$ of the fan  is zero and $s\left( \gamma_0 \right)=-1$.
The values of the sign function $s(\gamma)$ for
$ \gamma \in \Gamma_{\hat{A_1} \longrightarrow  \hat{B_2} }$ are presented in Figure \ref{fig:AffineB2A1Fan}.
We restricted the computation to the twelfth grade.
\begin{figure}[h!bt]
  \centering
  \includegraphics[width=135mm]{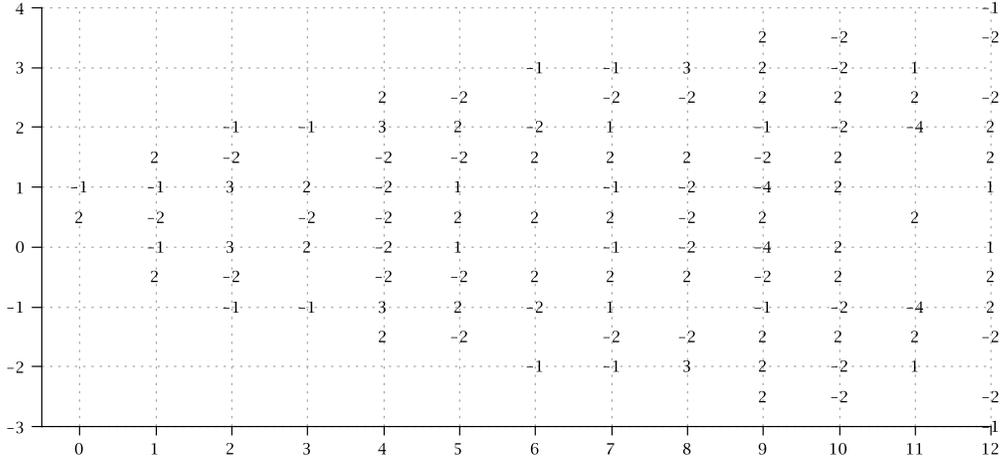}
  \caption{The fan $\Gamma_{\hat{A_1}\rightarrow \hat{B_2}}$
  for $\hat{A_1}\rightarrow \hat{B_2}$ in the basis $\left\{\beta,\delta \right\}$. Values of  $s(\gamma)$ are shown for the
  weights $\gamma\in \Gamma_{\hat{A_1}\rightarrow \hat{B_2}}$}
  \label{fig:AffineB2A1Fan}
\end{figure}

Consider the level one module $L^{\left( 1,0;1;0 \right)}_{\hat{B_2}}$  with the highest weight $\omega_1=(1,0;1;0)$,
where the finite part coordinates are in the orthogonal basis $e_1,e_2$.
The set of anomalous weights for this module up to the sixth grade is presented in the Figure \ref{fig:affine_B2_anom_point}.

\begin{figure}[h!tb]
  \includegraphics[width=140mm]{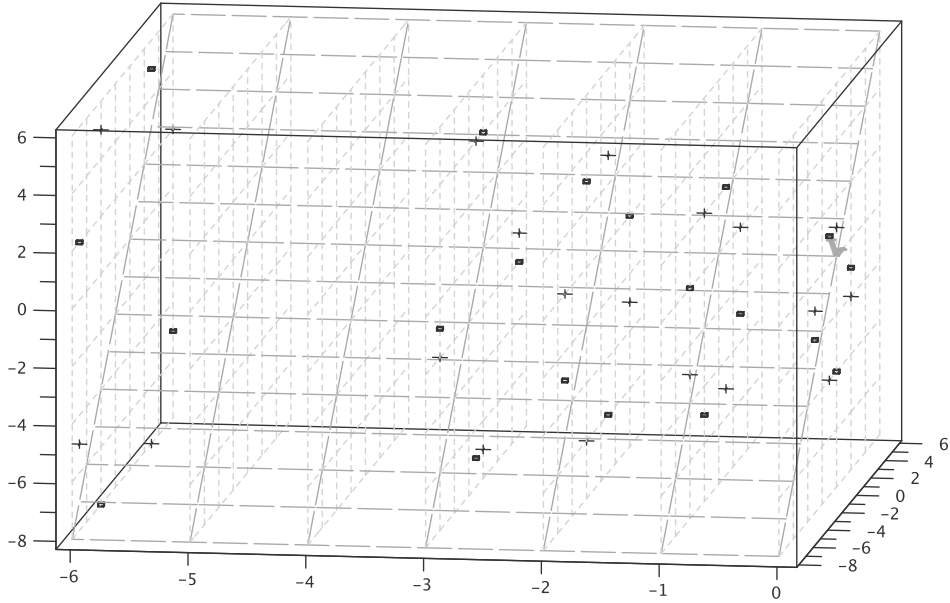}
  \caption{Singular weights for $L^{(1,0;1;0)}_{\hat B_2 }$. The standard basis $\{e_1,e_2\}$ is used for the classical cross-section.
  The weights in the zero grade are the same as in Figure \ref{fig:B2_A1}.
  The weights $w (\omega_1+\rho)-\rho$ are marked by crosses if $\epsilon(w)=1$ and by boxes for $\epsilon(w)=-1$.
Simple roots of the classical subalgebra $B_2$ are grey and grey diagonal plane corresponds to the Cartan subalgebra
of the embedded algebra $\hat{A}_1$.}
  \label{fig:affine_B2_anom_point}
\end{figure}

According to the algorithm \ref{sec:algorithm} we project the anomalous weights to
$P_{\hat{A_1}}$ and find the dimensions of the corresponding
$\afb$-modules $L^{\pi_{\afb}(w(\mu+\rho))-\rho_{\afb}}_{\afb}$.
In the grade zero this projection gives exactly the set
$\Psi ^{\left( \mu \right) }_{\left(  A_1, A_1 \right)}$ for the embedding of
the classical Lie algebra $A_1\rightarrow B_2$. To see this compare Figure \ref{fig:B2_A1}
with the Figure \ref{fig:AffineB2_A1_anom_proj}
where the singular element $\Psi ^{\left( \mu \right) }_{\left(  \widehat{A_1}, A_1 \right)}$
for the affine embedding $\hat{A_1}$ is presented up to the twelfth grade.
\begin{figure}[h!tb]
  \centering
  \includegraphics[width=120mm]{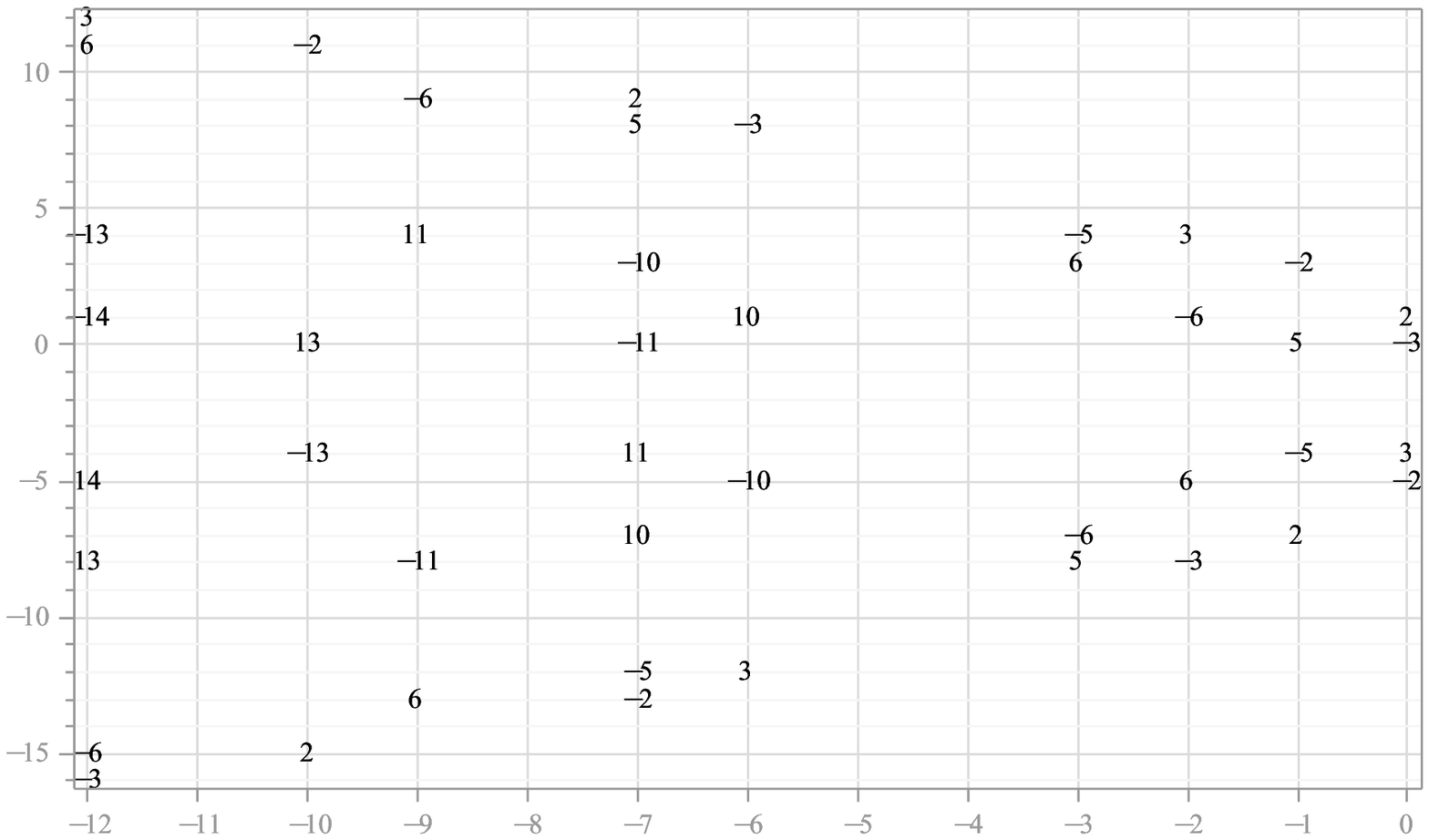}
  \caption{The singular element $\Psi ^{\left( \omega_1 \right) }_{\left(  \widehat{A_1}, A_1 \right)}$
  in the basis $\{\beta,\delta\}$. The dimensions of the corresponding $\afb=A_1$-modules
  with the signs $\epsilon(u)$ are indicated.}
  \label{fig:AffineB2_A1_anom_proj}
\end{figure}

\begin{figure}[h!bt]
  \centering
  \includegraphics[width=120mm]{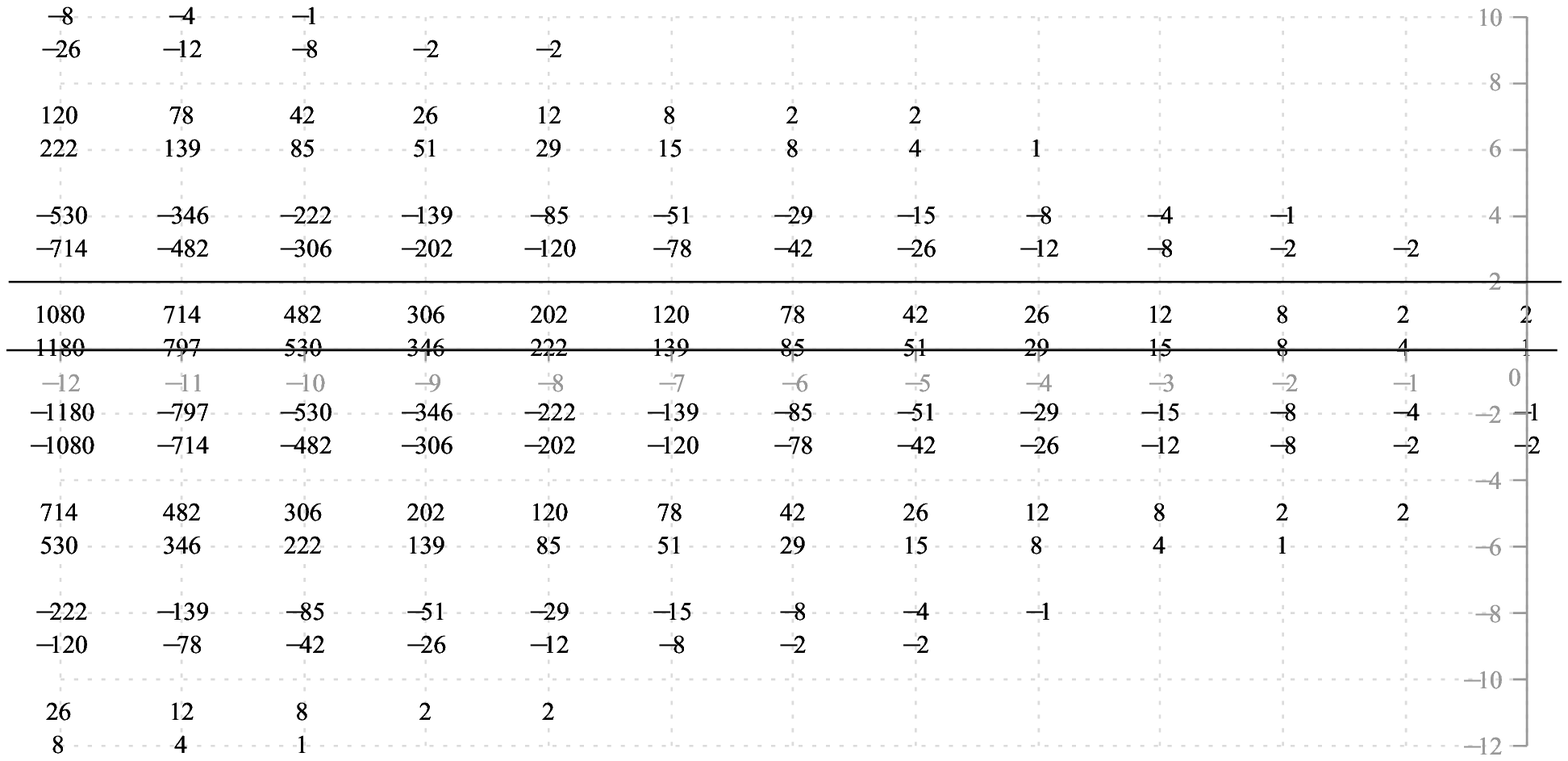}
  \caption{Anomalous branching coefficients for $\hat{A_1}\rightarrow \hat{B_2}$. The basis $\{\beta,\delta\}$ is used.
  The boundaries  of the main Weyl chamber $\bar{C}_{\hat{A}_1}$
 are indicated by the black lines. The anomalous branching coefficients
 inside the main Weyl chamber are equal to the branching coefficients of the embedding $\hat{A_1}\rightarrow \hat{B_2}$.}
  \label{fig:AffineB2_A1_branching}
\end{figure}

The multiplicities of the highest weights inside the  Weyl chamber
$\bar{C}^{\left( 0 \right)}_{\hat{A_1}}$
define the following branching coefficients (up to the twelfth grade),
\begin{eqnarray*}
  \label{eq:28}
  L^{\omega_1}_{\hat{B_2}\downarrow \hat{A_1}}
  &=&2 L_{\hat{A_1}}^{\omega_1}\oplus 1 L_{\hat{A_1}}^{\omega_0}\oplus 4 L_{\hat{A_1}}^{\omega_0-\delta}\oplus\\
    &&2 L_{\hat{A_1}}^{\omega_1-\delta}\oplus 8 L_{\hat{A_1}}^{\omega_0-2\delta}\oplus
    8 L_{\hat{A_1}}^{\omega_1-2\delta}\oplus 15 L_{\hat{A_1}}^{\omega_0-3\delta}\oplus\\
    &&12 L_{\hat{A_1}}^{\omega_1-3\delta}\oplus 26 L_{\hat{A_1}}^{\omega_1-4\delta}\oplus
    29 L_{\hat{A_1}}^{\omega_0-4\delta}\oplus 51 L_{\hat{A_1}}^{\omega_0-5\delta}\oplus\\
    &&42 L_{\hat{A_1}}^{\omega_1-5\delta}\oplus 78 L_{\hat{A_1}}^{\omega_1-6\delta}\oplus
    85 L_{\hat{A_1}}^{\omega_0-6\delta}\oplus 120 L_{\hat{A_1}}^{\omega_1-7\delta}\oplus\\
    &&139 L_{\hat{A_1}}^{\omega_0-7\delta}\oplus 202 L_{\hat{A_1}}^{\omega_1-8\delta}\oplus
    222 L_{\hat{A_1}}^{\omega_0-8\delta}\oplus 306 L_{\hat{A_1}}^{\omega_1-9\delta}\oplus\\
    &&346 L_{\hat{A_1}}^{\omega_0-9\delta}\oplus 530 L_{\hat{A_1}}^{\omega_0-10\delta}\oplus
    482 L_{\hat{A_1}}^{\omega_1-10\delta}\oplus 714 L_{\hat{A_1}}^{\omega_1-11\delta}\oplus\\
    &&797 L_{\hat{A_1}}^{\omega_0-11\delta}\oplus 1080 L_{\hat{A_1}}^{\omega_1-12\delta}\oplus
    1180 L_{\hat{A_1}}^{\omega_0-12\delta}\oplus \dots
\end{eqnarray*}
This result can be presented as the set of branching functions:
\begin{eqnarray*}
  \label{eq:29}
  \begin{array}{cc}
    b^{(\omega_1)}_{0}= & 1 + 4\,q^{1}+ 8\,q^{2}+ 15\,q^{3}+ 29\,q^{4}+ 51\,q^{5}+ 85\,q^{6}+ 139\,q^{7}+\\
     &222\,q^{8}+ 346\,q^{9}+ 530\,q^{10}+ 797\,q^{11}+ 1180\,q^{12}+\dots\\
  \end{array}\\
  \begin{array}{cc}
    b^{(\omega_1)}_{1}= &2+2\,q^{1}+8\,q^{2}+12\,q^{3}+26\,q^{4}+42\,q^{5}+78\,q^{6}+120\,q^{7}+\\
    & 202\,q^{8}+306\,q^{9}+482\,q^{10}+714\,q^{11}+1080\,q^{12}+\dots
  \end{array}
\end{eqnarray*}
Here $q=\exp (2\pi i \tau)$ and the lower index enumerates the branching functions according
to their highest weights in $P^+_{\hat{A_1}}$.
These are the fundamental weights $\omega_0=\lambda_0=(0,1,0),\; \omega_1=\alpha/2=(1,1,0)$.

Now we can use the relation (\ref{eq:31}),
\begin{equation*}
  \label{eq:35}
  \begin{array}{cc}
    \chi^{(\omega_1)}_{1}(q)= & q^{\frac{7}{12}}\left( 2+2\,q^{1}+8\,q^{2}+12\,q^{3}+26\,q^{4}+42\,q^{5}+78\,q^{6}+120\,q^{7}+\right. \\
    & \left. 202\,q^{8}+306\,q^{9}+482\,q^{10}+714\,q^{11}+1080\,q^{12}+\dots \right),\\
    \chi^{(\omega_1)}_{0}(q) = & q^{\frac{5}{6}}\left(1 + 4\,q^{1}+ 8\,q^{2}+ 15\,q^{3}+ 29\,q^{4}+ 51\,q^{5}+ 85\,q^{6}+ 139\,q^{7}+\right. \\
    &\left. 222\,q^{8}+ 346\,q^{9}+ 530\,q^{10}+ 797\,q^{11}+ 1180\,q^{12}+\dots\right),
  \end{array}
\end{equation*}
and thus obtain the expansion of the $B_2/A_1$-coset characters.

\section{Conclusion}
\label{sec:conclusion}
We have demonstrated that the injection fan technique can be used to deal with an arbitrary
reductive subalgebra (maximal as well as  nonmaximal).
It was shown that the branching problem for $\af \subset \gf$  is tightly connected with
the properties of the orthogonal partner $ \af_{\perp
} $ of $\af$. The subalgebra $\afb$ corresponds to the subset
$\Delta^{+}_{\afb}$ of positive roots in $\Delta_{\mathfrak{g}}^{+}$ that trivialize
the Cartan subalgebra $\hf_{\afb}$.
Both the injection fan and the sets of singular weights for
the highest weight $\gf$-modules depend substantially on the structure of $\afb$ and its submodules.
For the fan $\Gamma_{\af\rightarrow \gf}$ this dependence is almost obvious:
in the element $\Phi_{\af\rightarrow \gf}$ the factors corresponding to the roots
of $\Delta^{+}_{\afb}$ are eliminated.
The transformation in the set of projected singular weights is more interesting.
We have found out that in the new singular element
$\Psi ^{\left( \mu \right) }_{\left(  \af, \afb \right)}$ the coefficients depend on the
the $\afb$-submodules (their highest weights $\mu _{\widetilde{\af_{\perp }}}\left( u\right)$
are fixed by the injection and by the weights of the initial
element $\Psi^{\mu}$).
Fortunately no more information on $L^{\mu _{\widetilde{\af_{\perp }}}\left( u\right)}
_{\afb}$-submodules is necessary than their dimensions.
In the new singular element $\Psi ^{\left( \mu \right) }_{\left(  \af, \afb \right)}$
the weight multiplicities are equal to the dimensions
$\dim\left(L^{\mu _{\widetilde{\af_{\perp }}}\left( u\right)}_{ \afb }\right)$
of the corresponding $\afb$-modules
multiplied by the values $\epsilon (u)$. As a result
the highest weights of $\af$-submodules and their
multiplicities are subject to the set of linear equations (\ref{eq:17}).
These properties are valid for any reductive subalgebra $\af\rightarrow \gf$ and
the set can be redressed to the form of recurrent relations to be solved step by step.

The efficiency of the obtained algorithm was illustrated in various examples.
In particular we considered the construction of modular-invariant partition functions
in the framework of conformal embedding method and the coset construction in the rational conformal field theory.
This construction is useful in the study of WZW-models
emerging in the context of the AdS/CFT correspondence \cite{Maldacena:2000hw,Maldacena:2000kv,Maldacena:2001km}.

Further amelioration of the algorithm can be achieved by using
the folded fan technique \cite{il2010folded}. It must be mentioned that even in the case
of string functions the explicit solution of the corresponding recurrent relations is
a difficult problem (see \cite{il2010folded} for details). Nevertheless we hope that by
developing the procedure of folding one could get explicit solutions
for at least some of branching functions and the corresponding coset characters.

\section{Acknowledgements}
The work was supported in
part by RFFI grant N 09-01-00504 and the National Project RNP.2.1.1./1575.

\section*{References}
\bibliography{article}{}

\providecommand{\newblock}{}
\begin{thebibliography}{10}
\expandafter\ifx\csname url\endcsname\relax
  \def\url#1{{\tt #1}}\fi
\expandafter\ifx\csname urlprefix\endcsname\relax\def\urlprefix{URL }\fi
\providecommand{\eprint}[2][]{\url{#2}}

\bibitem{difrancesco1997cft}
Di~Francesco P, Mathieu P and Senechal D 1997 {\em {Conformal field theory}\/}
  (Springer)

\bibitem{coquereaux2008conformal}
Coquereaux R and Schieber G 2008 {From conformal embeddings to quantum
  symmetries: an exceptional SU (4) example} {\em Journal of Physics:
  Conference Series\/} vol 103 (Institute of Physics Publishing) p 012006
  (\textit{Preprint} \eprint{0710.1397})

\bibitem{bernstein1975differential}
Bernstein I, Gelfand M and Gelfand S 1975 {Differential operators on the base
  affine space and a study of $\gamma$-modules, Lie groups and their
  representations} {\em Summer school of Bolyai Janos Math.Soc.\/} (Halsted
  Press, NY)

\bibitem{kac1990idl}
Kac V 1990 {\em {Infinite dimensional Lie algebras}\/} (Cambridge University
  Press)

\bibitem{wakimoto2001idl}
Wakimoto M 2001 {\em {Infinite-dimensional Lie algebras}\/} (American
  Mathematical Society)

\bibitem{fauser2006new}
Fauser B, Jarvis P, King R and Wybourne B 2006 {\em J. Phys A: Math. Gen\/}
  {\bf 39} 2611--2655 (\textit{Preprint} \eprint{math-ph/0505037})

\bibitem{Hwang:1994yr}
Hwang S and Rhedin H 1995 {\em Mod. Phys. Lett.\/} {\bf A10} 823--830
  (\textit{Preprint} \eprint{hep-th/9408087})

\bibitem{quella2002branching}
Quella T 2002 {\em Journal of Physics A-Mathematical and General\/} {\bf 35}
  3743--3754 (\textit{Preprint} \eprint{math-ph/0111020})

\bibitem{feigin707principal}
{Feigin} B, {Feigin} E, {Jimbo} M, {Miwa} T and {Mukhin} E 2007
  (\textit{Preprint} \eprint{0707.1635})

\bibitem{ilyin812pbc}
Ilyin M, Kulish P and Lyakhovsky V 2009 {\em Algebra i Analiz\/} {\bf 21} 2
  (\textit{Preprint} \eprint{0812.2124})

\bibitem{humphreys1997introduction}
Humphreys J 1997 {\em {Introduction to Lie algebras and representation
  theory}\/} (Springer) ISBN 0387900535

\bibitem{1751-8121-41-36-365204}
Felikson A, Retakh A and Tumarkin P 2008 {\em Journal of Physics A:
  Mathematical and Theoretical\/} {\bf 41} 365204

\bibitem{dynkin1952semisimple}
Dynkin E 1952 {\em Matematicheskii Sbornik\/} {\bf 72} 349--462

\bibitem{Walton:1999xc}
Walton M 1999  (\textit{Preprint} \eprint{hep-th/9911187})

\bibitem{walton1989conformal}
Walton M 1989 {\em Nuclear Physics B\/} {\bf 322} 775--790

\bibitem{schellekens1986conformal}
Schellekens A and Warner N 1986 {\em Physical Review D\/} {\bf 34} 3092--3096

\bibitem{1994hepthGannon}
Gannon T 1994  (\textit{Preprint} \eprint{hep-th/9404185})

\bibitem{1995JMPGannon}
Gannon T 1995 {\em Journal of Mathematical Physics\/} {\bf 36} 675--706
  (\textit{Preprint} \eprint{hep-th/9402074})

\bibitem{kac1988modular}
Kac V and Wakimoto M 1988 {\em Advances in mathematics(New York, NY. 1965)\/}
  {\bf 70} 156--236

\bibitem{Goddard198588}
Goddard P, Kent A and Olive D 1985 {\em Physics Letters B\/} {\bf 152} 88 -- 92
  ISSN 0370-2693

\bibitem{Maldacena:2000hw}
Maldacena J~M and Ooguri H 2001 {\em J. Math. Phys.\/} {\bf 42} 2929--2960
  (\textit{Preprint} \eprint{hep-th/0001053})

\bibitem{Maldacena:2000kv}
Maldacena J~M, Ooguri H and Son J 2001 {\em J. Math. Phys.\/} {\bf 42}
  2961--2977 (\textit{Preprint} \eprint{hep-th/0005183})

\bibitem{Maldacena:2001km}
Maldacena J~M and Ooguri H 2002 {\em Phys. Rev.\/} {\bf D65} 106006
  (\textit{Preprint} \eprint{hep-th/0111180})

\bibitem{Maldacena:2001ky}
Maldacena J~M, Moore G~W and Seiberg N 2001 {\em JHEP\/} {\bf 07} 046
  (\textit{Preprint} \eprint{hep-th/0105038})

\bibitem{Aharony:1999ti}
Aharony O, Gubser S~S, Maldacena J~M, Ooguri H and Oz Y 2000 {\em Phys.
  Rept.\/} {\bf 323} 183--386 (\textit{Preprint} \eprint{hep-th/9905111})

\bibitem{Dunbar:1992gh}
Dunbar D~C and Joshi K~G 1993 {\em Int. J. Mod. Phys.\/} {\bf A8} 4103--4122
  (\textit{Preprint} \eprint{hep-th/9210122})

\bibitem{lu1994branching}
Lu S 1994 {\em Advances in Mathematics\/} {\bf 105} 42--58

\bibitem{il2010folded}
Il'in M, Kulish P and Lyakhovsky V 2010 {\em Zapiski Nauchnykh Seminarov
  POMI\/} {\bf 374} 197--212

\end{thebibliography}
\bibliographystyle{iopart-num}

\end{document}